\renewcommand{\leq}{\leqslant}
\renewcommand{\ge}{\geqslant}
\renewcommand{\le}{\leqslant}
\renewcommand{\baselinestretch}{1.25}
\newcommand{\R}{\mathbb{R}}
\newtheorem{theorem}{Theorem}
\newtheorem{lemma}[theorem]{Lemma}
\newtheorem{proposition}[theorem]{Proposition}
\newtheorem{question}[theorem]{Question}
\theoremstyle{definition}
\theoremstyle{remark}
\title{Lower bound theorems for general polytopes}
\author{Guillermo Pineda-Villavicencio}
\address{Centre for Informatics and Applied Optimisation, Federation University,  Ballarat, Victoria 3353, Australia}
\email{\texttt{work@guillermo.com.au}}
\author{Julien Ugon}
\address{School of Information Technology, Deakin University,  Burwood, Victoria 3125, Australia}
\email{\texttt{julien.ugon@deakin.edu.au}}
\author{David Yost}
\address{Centre for Informatics and Applied Optimisation, Federation University, Ballarat, Victoria 3353, Australia}
\email{\texttt{d.yost@federation.edu.au}}
\keywords{polytope; simplicial polytope; Lower Bound theorem}
\subjclass[2010]{Primary 52B05; Secondary 52B12, 52B22}
\thanks{\copyright 2019. This version is made available under CC-BY-NC-ND 4.0, \url{creativecommons.org/licenses/by-nc-nd/4.0/}}
\def\h{\frac{1}{2}}
\begin{document}

\begin{abstract}
For a $d$-dimensional polytope with $v$ vertices,  $d+1\le v\le2d$, we calculate precisely the minimum possible number of $m$-dimensional faces, when $m=1$ or
$m\ge0.62d$. This confirms a conjecture of Gr\"unbaum, for these values of $m$. For $v=2d+1$, we solve the same problem when $m=1$ or $d-2$; the solution was
already known for $m= d-1$. In all these cases, we give a characterisation of  the minimising polytopes. We also show that there are many gaps in the
possible number of $m$-faces: for example, there is no polytope with 80 edges in dimension 10, and a polytope with 407 edges  can have dimension at most 23.
\end{abstract}\maketitle

\section{Introduction}
A problem which has long been of interest is determining the possible number of $m$-dimensional faces of a polytope, given the number of vertices; see, for instance, \cite[pp.~1152-1153]{Gru70} or \cite[Sec.~10.2]{G}. Most of this paper is concerned with the case $m=1$. Accordingly, we consider the set $E(v,d)=\{e:$ there is a $d$-polytope with $v$ vertices and $e$ edges$\},$
and define, following Gr\"unbaum's notation \cite[p 184]{G},
$$\phi(v,d)={d+1\choose2}+{d\choose2}-{2d+1-v\choose2}.$$

 It is convenient to note two equivalent expressions for $\phi$,
$$\phi(v,d)={v\choose2}-2{v-d\choose2}$$
and
$$\phi(d+k,d)=\h d(d+k)+\h(k-1)(d-k).$$

Our  fundamental result, in \S3, is  to prove Gr\"unbaum's conjecture \cite[p.~183]{G} that $\phi(v,d)=\min E(v,d)$ for $d<v\le2d$. Gr\"unbaum proved this for $d<v\le d+4$.

In \S4, we  prove that  $\min E(2d+1,d)=d^2+d-1$ for every  $d\ne4$. This was well known in the cases $d=2$ or 3, and Gr\"unbaum \cite[p 193]{G} noted that $\min E(9,4)=18$. In all cases, we also characterize, up to combinatorial equivalence, the $d$-polytopes with minimal number of edges.

In fact, the function $\phi$ was called $\phi_1$ in \cite{G}; for simplicity we will {(except in the final section)} continue to drop the subscript $_1$. Gr\"unbaum actually defined a function
$$\phi_m(v,d)={d+1\choose m+1}+{d\choose m+1}-{2d+1-v\choose m+1}$$
for each $m\le d$, and conjectured that this is  the minimum possible value of $f_m(P)$ over all $d$-polytopes  $P$ with $v$ vertices, provided $v\le2d$.
Here $f_m(P)$ denotes as usual the number of $m$-dimensional faces of  $P$. We will say  more about higher dimensional faces in \S6.

Precise upper bounds for the numbers of edges are easy to obtain. A well known result of Steinitz \cite[Sec.~10.3]{G} asserts that $\max E(v,3)=3v-6$, and the existence of cyclic polytopes shows that
$\max E(v,d)={v\choose2}$ for $d\ge4$. Since cyclic polytopes are simplicial, the upper bound question for general polytopes has the same solution as the  upper bound question for simplicial polytopes.

So we concentrate on lower bounds, which  for general polytopes have been elusive to obtain.  The most important result to date is
Barnette's Lower Bound Theorem for  simplicial polytopes.

\begin{theorem}\label{LBT} \cite{Ba} For  any simplicial $d$-polytope with $v$ vertices and $e$ edges, we have
$$e\ge dv-{d+1\choose2}.$$

\end{theorem}

Barnette also showed that there exist simplicial $d$-polytopes, namely the
stacked polytopes, with precisely this many edges. Kalai's Rigidity Theorem \cite{K} asserts that this  lower bound is still correct under the weaker assumption that every 2-face is a triangle. However little seems to be known for general polytopes.

It is well known that there is no 3-polytope with 7 edges. In \S5, we show that there are many more gaps in the possible number of edges of higher dimensional polytopes.

\section{ Polytopes with many vertices}

This  section justifies our focus on low values of $v$. However it can be skipped, as the rest of the paper does not essentially depend on it.

Naturally every vertex in a $d$-polytope has degree at least $d$; a vertex with degree exactly $d$ is called {\it simple}. A $d$-polytope is {\it simple} if every vertex is simple, which is equivalent to saying that $2e=dv$. In particular, the existence of a simple $d$-polytope with $v$ vertices implies that $\min E(v,d)=\h dv$. Conversely, if $\min E(v,d)=\h dv$, this minimum must be achieved by simple $d$-polytope with $v$ vertices.

 We show here that the problem of calculating $\min E(v,d)$ is more interesting for small
values of $v$. This is not new, and our estimate for $K$ is not the best possible, but our argument is completely elementary.

\begin{proposition}
For each $d$, there is an integer $K$ such that, for all $v > K$, if either $v$ or $d$ is even,
then $\min E(v,d)=\h dv$.
\end{proposition}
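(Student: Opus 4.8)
The plan is to reduce the statement to a purely constructive one and then build the required polytopes explicitly. By the remark preceding the proposition, $\min E(v,d)=\frac12 dv$ holds precisely when there exists a \emph{simple} $d$-polytope with $v$ vertices. Since $2e=dv$ forces $dv$ to be even, the parity hypothesis (that $v$ or $d$ is even) is exactly the condition that $dv$ is even, and it is clearly necessary. So it suffices to prove: whenever $dv$ is even and $v$ is large, a simple $d$-polytope on $v$ vertices exists. I take $d\ge2$, the case $d=2$ being immediate since every polygon is simple.

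The two tools are face truncations, both of which preserve simplicity. Cutting off a single (simple) vertex of a simple $d$-polytope replaces it by a $(d-1)$-simplex facet, removing one vertex and introducing $d$, for a net change of $+(d-1)$. Cutting off an edge replaces it by a facet combinatorially equal to $\Delta^{d-2}\times I$ with $2(d-1)$ vertices, removing two vertices and introducing $2(d-1)$, for a net change of $+2(d-2)$. I would first record these two facts --- that each truncation yields a simple polytope, and that the vertex counts change by $d-1$ and by $2(d-2)$ respectively --- verifying them directly from the local orthant picture at a simple vertex.

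Starting from the $d$-simplex, which is simple with $d+1\equiv2\pmod{d-1}$ vertices, I would then track the number of vertices modulo $d-1$. A vertex truncation leaves this residue unchanged while increasing $v$, so each attainable residue class already yields \emph{all} sufficiently large $v$ in that class. An edge truncation shifts the residue by $2(d-2)\equiv-2\pmod{d-1}$. Hence, beginning at the residue $2$, repeated edge truncations realise exactly the residues in the additive subgroup $\langle2\rangle$ of $\mathbb{Z}/(d-1)\mathbb{Z}$. When $d$ is even, $d-1$ is odd, $2$ generates the whole group, and every residue --- hence every large $v$ --- is attained. When $d$ is odd, $d-1$ is even and $\langle2\rangle$ is the set of even residues; but then $d+1$ and $d-1$ are both even, so every even $v$ lies in an even residue class, and these are precisely the $v$ with $dv$ even. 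In either case every admissible large $v$ is realised.

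Finally I would extract $K$. For each admissible residue class one needs only a bounded number (at most $d-1$) of edge truncations from the simplex to land in it, producing a simple polytope with at most roughly $(d+1)+2(d-1)(d-2)$ vertices; taking $K$ to be this quantity --- an $O(d^2)$ bound, and deliberately not optimal --- makes every admissible $v>K$ attainable by subsequent vertex truncations. The main obstacle is the second truncation lemma: one must check carefully that cutting off an edge of a simple polytope again produces a simple polytope and that the new facet really is a prism over a $(d-2)$-simplex, since the whole residue-covering argument depends on the step size $2(d-2)$ and on simplicity being preserved.
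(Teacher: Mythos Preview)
Your proposal is correct and follows essentially the same approach as the paper: both use vertex truncation (adding $d-1$ vertices) and edge truncation (adding $2(d-2)$ vertices) of simple polytopes, together with elementary number theory, to construct simple $d$-polytopes with every admissible large vertex count. The only difference is cosmetic---you organise the arithmetic via residues modulo $d-1$, while the paper uses the Frobenius-type fact that $\{a(d-2)+b(d-1):a,b\ge0\}\supseteq[(d-2)(d-3),\infty)$---and both routes yield the same $O(d^2)$ bound for $K$.
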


\begin{proof} First note that for a fixed integer $k$, $\{ak+b(k+1): a,b\ge0,\, a, b\in \mathbb{N}\}$ contains the interval $[k^2-k,\infty)$. A special case of this is that  $\{a(d-2)+b(d-1): a,b\ge0\}$ contains  the interval $[(d-2)(d-3),\infty)$. Multiplying everything by 2 we conclude that
$\{a(2d-4)+2b(d-1): a,b\ge0\}$ contains every even number from $2(d-2)(d-3)$  onwards.
Now cutting a vertex from a simple $d$-polytope gives us another simple $d$-polytope with $d-1$ more vertices;
while cutting an edge from a simple $d$-polytope gives us another simple polytope with $2d-4$ more vertices.

If $d$ is even, it follows that for every  odd $v\ge d+1 + 2(d-2)(d-3)$ and  for every even $v\ge2d+ 2(d-2)(d-3)$ there exists a simple $d$-polytope with $v$ vertices.

 When  $d$ is odd, we see that for every  even $v\ge  d+1 + 2(d-2)(d-3)$ there exists a simple $d$-polytope with $v$ vertices.
\end{proof}

For the case when $d$ and $v$ are both odd, we can prove that $\min E(v,d)=\h d(v+1)-1$ for all sufficiently large $v$. The proof of this lies somewhat deeper, and details will appear elsewhere \cite{PUY}.

This proof gives an estimate for $K(d)$ of about $2d^2$. The original proof of Lee \cite[Corollary 4.4.15]{L} gave a weaker estimate, about $d^3$, more precisely a polynomial with leading term $d^3/24$. (The lower order terms were different, depending on the parity of $d$, but their coefficients were all positive.) This was improved by Bj{\"o}rner and Linusson \cite{BL}, motivated by work of Prabhu \cite{P}, to $\sqrt{2d^3}$ when $d$ is even and $\sqrt{d^3}$ when $d$ is odd. The proofs of Lee and of Bj{\"o}rner \& Linusson both depended on the $g$-theorem \cite[\S8.6]{Z}. Prabhu's did not, but it is still less elementary than ours.

\section{Polytopes with up to $2d$ vertices}

It is often easier to work with the {\it excess degree } of a $d$-polytope, which we define as

$$\xi(P)=2e-dv=\sum_{v\in V}(\deg v -d).$$
A polytope is simple if, and only if, its excess degree is 0. Note that for fixed $d$ and $v$, the possible values of the excess are either all even or all odd.
The excess degree function is studied in further detail in \cite{PUY}.

Throughout, we will use the word {\bf prism} to mean a prism whose base is a simplex. Such a prism is also called a {\it simplicial prism}.
An object of natural interest to us is
 any multifold pyramid over a prism based on a simplex; this is the same as the free join of a prism and a simplex. Any such polytope has three disjoint simplex faces (not necessarily unique), whose union contains every vertex. Accordingly, we will  define a  ``triplex" as any such free join. Clearly a $d$-dimensional triplex has at most $2d$ vertices.

To be more precise, we introduce the notation $M_{k,d-k}$ for any $(d-k)$-fold pyramid over a $k$-dimensional simplicial prism, $1\le k\le d$. Any triplex is of this form for some values of $d$ and $k$. Clearly $M_{1,d-1}$ is a simplex, and $M_{d,0}$ is a prism. Each simplex, and each triplex $M_{2,d-2}$, is a multiplex as defined by Bisztriczky \cite{B}, but other triplices are not.

It is worth noting that for $k\ge3$, $M_{k,d-k}$ has three types of facet:
\begin{enumerate}
\item
{$d-k$ facets of the form} $M_{k,d-k-1}$  (by definition),
\item
{$k$ facets of the form} $M_{k-1,d-k}$ (each of which has two vertices outside),
\item
{and 2 facets of the form} $M_{1,d-2}$ (both simplices).
\end{enumerate}

For $k=2$, the latter two forms coincide.
{If $k\ge2$, then} $M_{k,d-k}$ has $d+2$ facets altogether. More generally, let us note here that  if $P=M_{k,d-k}$ is a triplex with $d+k$ vertices, then
$$f_m(M_{k,d-k})=\phi_m(d+k,d).$$

In general, if $P$ is a pyramid with base $F$, then $f_m(P)=f_m(F)+f_{m-1}(F)$, so this calculation is quite routine.
We will show in this section that $M_{k,d-k}$ is (up to combinatorial equivalence) the unique $d$-polytope which minimises the number of edges of a $d$-polytope with $d+k$ vertices (for $1\le k\le d$).
 We will show in the last section that $M_{k,d-k}$ is also the unique $d$-polytope which minimises the number of $m$-faces of a $d$-polytope with $d+k$ vertices, at least for $0.62d\le m\le d-2$.

\begin{figure}[h]
\centering
\includegraphics[scale=.9]{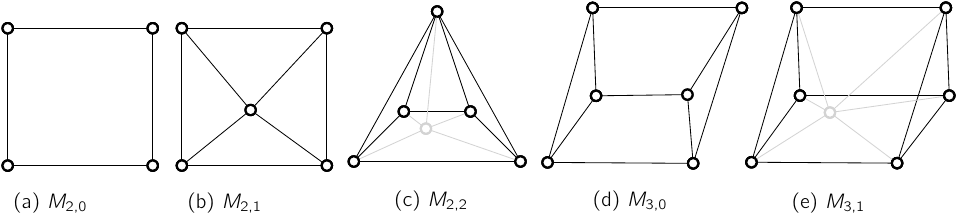}
\label{fig:multiplexes}
\caption{Schlegel diagrams of triplices.}
\end{figure}

The following identity is  useful for us to know.

\begin{lemma}\label{boring}
$$\phi(d+k-n,d-1)+nd-{n \choose 2}=\phi(d+k,d)+(k-n)(n-2).$$
\end{lemma}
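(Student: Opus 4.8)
The plan is to treat the lemma as a polynomial identity in the three variables $d,k,n$ and to verify it by direct expansion, using the closed form recorded just before the statement rather than the binomial definition of $\phi$, since the quadratic expression is far more convenient to manipulate. Recall that
\[
\phi(d+k,d)=\h d(d+k)+\h(k-1)(d-k),
\]
so the right-hand side of the lemma is simply this expression together with the correction $(k-n)(n-2)$, and the only real work is to bring the left-hand side into the same polynomial form.

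The one point requiring care is the reparametrisation of the first term on the left, because there the ambient dimension is $d-1$ rather than $d$. Writing the vertex count in the shape $(\text{dimension})+(\text{offset})$, we have $d+k-n=(d-1)+(k-n+1)$, so the relevant offset is $k-n+1$. Applying the closed form with $d$ replaced by $d-1$ and $k$ replaced by $k-n+1$, and simplifying $(k-n+1)-1=k-n$ together with $(d-1)-(k-n+1)=d-k+n-2$, yields
\[
\phi(d+k-n,d-1)=\h(d-1)(d+k-n)+\h(k-n)(d-k+n-2).
\]
Adding the remaining terms $nd-\binom{n}{2}=nd-\h n(n-1)$ then exhibits the entire left-hand side as a single quadratic in $d,k,n$.

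It remains to expand both sides and compare coefficients. I expect both to reduce to $\h$ times the polynomial
\[
d^2+2dk-k^2+2kn-2n^2-d-3k+4n,
\]
which establishes the identity. There is no conceptual obstacle here; the only thing to watch is the bookkeeping of the cross terms, in particular that the $-2dn$ produced by $\phi(d+k-n,d-1)$ is cancelled exactly by the $+2nd$ contributed by the term $nd$, and that the several quadratic contributions in $n$ (one from the second summand of $\phi(d+k-n,d-1)$, one from $-\binom{n}{2}$, and the $-n^2$ hidden in $(k-n)(n-2)$ on the right) combine correctly. One could instead argue straight from the binomial definition, using $\binom{a}{2}-\binom{a-1}{2}=a-1$ to telescope the differences of binomial coefficients, but this route only lengthens the calculation without adding insight.
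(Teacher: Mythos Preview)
Your proposal is correct: both sides expand to $\tfrac{1}{2}\bigl(d^{2}+2dk-k^{2}+2kn-2n^{2}-d-3k+4n\bigr)$, exactly as you claim, and your reparametrisation $d+k-n=(d-1)+(k-n+1)$ is the right way to invoke the closed form for $\phi$. The paper itself gives no proof of this lemma at all, treating it as a routine identity; your direct expansion is precisely the verification the reader is left to supply.
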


Often, we will want to estimate the number of edges {in a polytope $P$ which involves a set $S$ of vertices} lying outside a given facet $F$. The following result gives this estimate, but is more general.

\begin{lemma}\label{outside} Let $S$ be a set of  $n$   vertices of  a $d$-polytope $P$, with $n\le d$.
Then the total number of edges containing at least one vertex in $S$ is at least
$nd-{n \choose 2}$.
\end{lemma}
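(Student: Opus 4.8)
The plan is to prove this by a direct double-counting argument on the edges incident to $S$, using only the fact recalled above that every vertex of a $d$-polytope has degree at least $d$. First I would split the edges meeting $S$ into two classes: those with both endpoints in $S$ (let $i$ be their number) and those with exactly one endpoint in $S$ (let $c$ be their number). The quantity we wish to bound below is precisely $i+c$. Summing the degrees of the vertices of $S$ counts each internal edge twice and each crossing edge once, so
$$\sum_{u\in S}\deg u = 2i + c.$$
Subtracting $i$ from both sides yields the clean identity
$$i+c = \sum_{u\in S}\deg u - i.$$

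Next I would insert two elementary bounds. Since each of the $n$ vertices of $S$ has degree at least $d$, we have $\sum_{u\in S}\deg u \ge nd$; and since $S$ contains only $n$ vertices, the number $i$ of edges internal to $S$ cannot exceed $\binom{n}{2}$. Combining these with the identity gives
$$i+c \ge nd - \binom{n}{2},$$
which is exactly the asserted lower bound. Note that this chain of inequalities makes no genuine use of the polytopal structure beyond the minimum-degree fact, and holds verbatim for any $n$.

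There is essentially no obstacle in the argument; the only point worth checking is that the estimate is not wasteful, i.e.\ that the two inequalities can hold simultaneously with equality. Equality forces every vertex of $S$ to be simple and every pair of vertices of $S$ to be joined by an edge, a configuration that is realisable precisely in the regime $n\le d$ assumed in the statement. This also explains why the hypothesis $n\le d$ is imposed: the expression $nd-\binom{n}{2}$ is increasing in $n$ only up to $n=d$ (the successive difference being $d-n$), so beyond that range it would cease to be the sharp estimate and the bound would become progressively weaker.
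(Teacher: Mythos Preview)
Your argument is correct and is essentially identical to the paper's own proof: both use that the degree sum over $S$ is at least $nd$, that this sum equals $2i+c$, and that $i\le\binom{n}{2}$, yielding $i+c\ge nd-\binom{n}{2}$. The paper merely compresses this into two sentences and writes the final bound in the equivalent form $n(d-(n-1))+\binom{n}{2}$; your additional remarks on equality and on the role of the hypothesis $n\le d$ are accurate commentary not present in the paper.
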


\begin{proof}

Each vertex in $S$ has degree at least $d$, and at most $n \choose 2$ edges connect them to one another.
Thus the total number of such edges  is at least
$n(d-(n-1))+{n \choose 2}$.
\end{proof}

A polytope is said to be {\it decomposable} if it can be expressed as the sum (or Minkowski sum) of two dissimilar  polytopes; this concept also makes sense for general convex bodies.  (Recall that the Minkowski sum $A+B$ of two convex bodies $A$ and $B$ is simply $\{a+b:a\in A \hbox{ and } b\in B\}$, and that two polytopes are {\it similar} if one can be obtained from the other by a dilation and a translation.)

Many results about decomposability, in particular the next theorem, are combinatorial rather than geometric in nature, and thus apply to all members of a combinatorial class. Recall that two polytopes are called combinatorially equivalent if they have isomorphic face lattices. Many properties of polytopes are preserved by combinatorial equivalence, but not all. In particular, decomposability is not preserved by combinatorial equivalence. That is, a class of combinatorially equivalent polytopes can have one member which is decomposable and another member which is indecomposable. This issue does not arise in this paper; indeed a consequence of our current work is that a combinatorial class of $d$-polytopes can suffer this ambiguity only if its members have at least $2d+2$ vertices. See \cite{PYdecomp} and the references therein for further discussion of this topic.

The following  sufficient condition  will be useful to us several times. It is due to Shephard; for another proof, see \cite[Prop. 5]{PYmore}.  Let us say that a facet $F$ of a polytope $P$ has {\it Shephard's property} if for every vertex $v \in F$, there exists exactly one edge in $P$ that is incident to $v$ and does not lie in $F$.

\begin{theorem}
[{\cite[Result (15)]{S}}] \label{shp} If a polytope $P$ has a facet $F$ with   Shephard's property, and there are at least two vertices outside $F$, then $P$ is
decomposable. In particular, any simple polytope other than a simplex is decomposable.
\end{theorem}

It is easily verified that the prism has  $2d$ vertices and $d^2$ edges, and is simple and decomposable.
Several times, we will need to know that the converse  is true. This was proved in \cite[Theorem 7.1, page 39]{Kal} but never published; a different proof is given in \cite[Theorem 10]{PYmore}.

\begin{proposition}\label{decomp} Let $P$ be a $d$-polytope with $2d$ or fewer vertices. Then the following are equivalent.

(i) $P$ is decomposable,

(ii) $P$ is simple but not a simplex,

(iii) $P$ is a simplicial prism,

(iv) $P$ has exactly $2d$ vertices and $d^2$ edges.
\end{proposition}

The next result not only verifies Gr\"unbaum's conjecture, but also establishes uniqueness of the minimizing polytope.

\begin{theorem}\label{big} Let $P$ be a $d$-dimensional polytope with $d+k$ vertices, where $0<k\leq d$. Then $P$ has at least $\phi(d+k,d)$ edges,  with equality if and only if
 $P$ is the $(d-k)$-fold pyramid over the $k$-dimensional prism.

\end{theorem}

\begin{proof}  It has already been noted   that the triplex $M_{k,d-k}$ has precisely $d+k$ vertices and  $\phi(d+k,d)$ edges. Since $\phi(2d,d)=d^2$,  the previous lemma establishes the case $k=d$.

We will proceed by induction on $k$; and for fixed $k$ we proceed by induction on $d$. The case $k=1$ is trivial and $k=2$ is both easy and well known \cite[Sec.~6.1,10.2]{G}.

Now we fix $k>2$ and proceed by induction on $d$.

Let $F$ be any facet of $P$, and let $n$ be the number of vertices not in $F$. Then $F$ has $d+k-n$ vertices and $0<n\leq k$.

First suppose $n=1$. Then $P$ is a pyramid over $F$,
and $f_1(P)=f_1(F)+f_0(F)$.
If $F$ is a triplex, then so is $P$, and we are finished. Otherwise, by induction on $d$,
$F$ has strictly more than $\phi(d+k-1,d-1)$ edges, and $P$ must have strictly more than $\phi(d+k-1,d-1)+(d+k-1) = \phi(d+k,d)$  edges.

For $n>1$, we can only estimate the number of edges outside $F$. By Lemma 2, this is at least $nd-{n \choose 2}$.

 \cref{boring} above establishes the conclusion if either $F$ is not a triplex, or $2< n< k$.

Consider the case when $n=2$, $k>n$, and $F$ is a triplex, and call $u,v$  the two vertices outside $F$. Then, since $F$ has $d-1+k-1$ vertices,  it must be $M_{k-1,d-k}$. Since $d>k$, $F$ is a pyramid over some ridge $R$ with $d-2+k-1$ vertices, i.e. $M_{k-1,d-k-1}$. The case when $P$ is a pyramid has been dealt with, so the other facet, $G$ say, containing the ridge $R$ must be a pyramid, say with apex $u$. Consider separately the edges in $F$, the edges joining $u$ to $F$, and the edges containing $v$:  then the total number of edges in the polytope is at least
$$\phi(d-1+k-1,d-1)+(d-2+k-1)+d=\phi(d+k,d)+k-2
$$
which clearly exceeds $\phi(d+k,d)$.

 The only remaining case is that $k=n$ for every facet. Then $P$ is simplicial, and we can apply the Lower Bound Theorem.
\end{proof}

The previous result fits neatly into a result about the excess degree.

\begin{theorem}\label{excess} Let $P$ be a $d$-dimensional polytope with $d+k$ vertices,  with $k>0$.

(i) If $d\ge4$, then $\xi(P)\le(k-1)(d+k)$, with equality in the case of cyclic polytopes.  For $d=3$, we have $\xi(P)\le3(k-1)$, with equality precisely for simplicial polyhedra.

(ii) If every 2-face of $P$ is a triangle, in particular if $P$ is simplicial, then $\xi(P)\ge(k-1)d$.

(iii) If $k\le d$, then $\xi(P)\ge(k-1)(d-k)$, with equality if and only if $P$ is a triplex $M_{k,d-k}$.
\end{theorem}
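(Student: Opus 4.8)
The plan is to derive all three parts by translating known bounds on the number of edges $e$ into bounds on the excess, via the defining identity $\xi(P)=2e-dv$ with $v=d+k$. In each case the substantive inequality has already been established (either here or in the cited literature), so only the arithmetic of the substitution remains.

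Part~(iii) is merely a restatement of Theorem~\ref{big}. That result gives $e\ge\phi(d+k,d)$, with equality exactly when $P$ is the triplex $M_{k,d-k}$. Using the identity $\phi(d+k,d)=\h d(d+k)+\h(k-1)(d-k)$ recorded earlier, I obtain $2\phi(d+k,d)-d(d+k)=(k-1)(d-k)$, and hence $\xi(P)=2e-d(d+k)\ge(k-1)(d-k)$, with the same equality condition. No further argument is needed here.

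For part~(ii) I would invoke Kalai's Rigidity Theorem \cite{K}, which yields $e\ge dv-{d+1\choose2}$ whenever every $2$-face is a triangle (the simplicial case being Barnette's theorem \cite{Ba}). Substituting $v=d+k$ gives $\xi=2e-dv\ge dv-2{d+1\choose2}=d(d+k)-d(d+1)=(k-1)d$. Part~(i) is the dual exercise, applied to the maximum number of edges: for $d\ge4$ the trivial bound $e\le{v\choose2}$, attained by the $2$-neighborly cyclic polytopes, gives $\xi\le v(v-1)-dv=(k-1)(d+k)$; while for $d=3$, Steinitz's bound $e\le3v-6$ gives $\xi\le3v-12=3(k-1)$.

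I expect no real obstacle, since Theorem~\ref{big} and the Rigidity Theorem do all the substantive work; the statement is essentially bookkeeping once those are in hand. The one point needing a little care is the equality clause in the $d=3$ case of part~(i): I would verify, from Euler's relation $v-e+f=2$ together with $2e\ge3f$, that $e=3v-6$ holds precisely when every $2$-face is a triangle, i.e.\ when the polyhedron is simplicial; and I would note that the cyclic polytopes realise the $d\ge4$ bound because they are $2$-neighborly, so that every pair of vertices spans an edge and $e={v\choose2}$.
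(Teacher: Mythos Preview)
Your proposal is correct and follows essentially the same route as the paper: each part is obtained by translating a known edge bound into an excess-degree bound via $\xi=2e-dv$, invoking respectively $e\le{v\choose2}$ (and Steinitz for $d=3$), Kalai's Rigidity Theorem, and Theorem~\ref{big}. The paper's own proof is equally brief, merely stating that each inequality ``is simply rewriting'' the corresponding edge bound; your version is a slightly more explicit rendition of the same argument.
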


\begin{proof}

(i) For $d\ge4$, this is simply rewriting the obvious assertion that $e$, the number of edges, cannot exceed $v\choose 2$.

(ii) The conclusion is a rewriting of the assertion that $e\ge dv-{d+1\choose2}$. For simplicial polytopes, this is  of course  \cref{LBT}. Kalai \cite[Theorem 1.4]{K} later proved that the same conclusion holds under the weaker assumption.

(iii) Likewise, this just reformulates the previous theorem in terms of the excess degree.\end{proof}

The preceding  theorem allows us to extend known results about gaps in the possible number of edges. The case $n=1$ in the next result is very well known. The cases $n=2$ and $n=3$ are due to Gr\"unbaum \cite[p188]{G}. Our argument for $n\ge4$ follows the same pattern.

\begin{proposition}
In dimension $d=n^2+j$, where $j\ge2$, there is no $d$-polytope  whose number of edges $f_1$ satisfies ${d+n\choose2}+1<f_1<{d+n\choose2}+j-1$.

\end{proposition}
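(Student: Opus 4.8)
The plan is to pin the gap down between two extremes: the largest edge count possible with $d+n$ vertices, and the smallest edge count forced once a further vertex is added. Everything hinges on the single identity
$$\phi(d+n+1,d)=\binom{d+n}{2}+j.$$
I would read this straight off the definition $\phi(v,d)=\binom{d+1}{2}+\binom{d}{2}-\binom{2d+1-v}{2}$: putting $v=d+n+1$ gives $2d+1-v=d-n$, so that $\phi(d+n+1,d)=\binom{d+1}{2}+\binom{d}{2}-\binom{d-n}{2}=d^2-\binom{d-n}{2}$, and the elementary rearrangement $2d^2-(d-n)(d-n-1)=(d+n)(d+n-1)+2(d-n^2)$ yields the claim, since $j=d-n^2$. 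Throughout I take $n\ge4$ (so $d=n^2+j\ge18\ge4$); the smaller cases are already accounted for.

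Given the identity, I would classify a hypothetical $d$-polytope $P$ by its number of vertices $v$, writing $f_1(P)$ for its number of edges. If $v\le d+n$, then because $d\ge4$ the maximal edge count is $\binom{v}{2}$, so $f_1(P)\le\binom{v}{2}\le\binom{d+n}{2}$ and $P$ sits at or below the foot of the gap. If instead $d+n+1\le v\le 2d$, Theorem \ref{big} (equivalently Theorem \ref{excess}(iii)) gives $f_1(P)\ge\phi(v,d)$; and since the increment $\phi(d+k+1,d)-\phi(d+k,d)=d-k$ is positive for $k<d$, the function $\phi(v,d)$ increases with $v$ over this range, so $f_1(P)\ge\phi(d+n+1,d)=\binom{d+n}{2}+j$, placing $P$ at or above the top of the gap.

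The remaining case $v>2d$ is the one not covered by Theorem \ref{big}, and here I would fall back on the crudest estimate: every vertex has degree at least $d$, so $2f_1(P)\ge dv\ge d(2d+1)$ and hence $f_1(P)\ge d^2+\tfrac{d}{2}$. A one-line computation gives $d^2+\tfrac{d}{2}-\bigl(\binom{d+n}{2}+j\bigr)=\tfrac12(d-n)^2+\tfrac12 n\ge0$, so once again $f_1(P)\ge\binom{d+n}{2}+j$. Assembling the three cases, every $d$-polytope has either $f_1(P)\le\binom{d+n}{2}$ or $f_1(P)\ge\binom{d+n}{2}+j$, so no value in the interval $\bigl[\binom{d+n}{2}+1,\binom{d+n}{2}+j-1\bigr]$ can occur, which is exactly the assertion.

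I expect no serious obstacle: the substance is packed into the identity, and the two inequalities bounding the endpoints are routine. The only point demanding a little care is that Theorem \ref{big} applies only for $v\le 2d$, so the regime $v>2d$ must be dispatched separately by the degree bound; verifying that this cruder bound still clears $\binom{d+n}{2}+j$ is precisely the content of the inequality $\tfrac12(d-n)^2+\tfrac12 n\ge0$, which holds trivially.
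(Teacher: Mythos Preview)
Your proof is correct and follows essentially the same route as the paper: the identity $\phi(d+n+1,d)=\binom{d+n}{2}+j$, the three-way split on $v$, and the trivial degree bound for $v>2d$. The only cosmetic difference is that in the last case the paper chains through $e>d^2=\phi(2d,d)>\phi(d+n+1,d)$, whereas you compute the gap $\tfrac12(d-n)^2+\tfrac12 n$ directly; both are immediate. Your restriction to $n\ge4$ (matching the paper's remark that $n\le3$ is already known) is harmless, since the bound $f_1(P)\le\binom{v}{2}$ holds in every dimension and the rest of the argument does not use $d\ge4$ either.
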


\begin{proof}
We  will use the easily established identity

$$\phi(d+n+1,d)={d+n\choose2}+d-n^2.$$

Let $P$ be a $d$-polytope with $v$ vertices and $e$ edges. If $v\le d+n$, clearly $e\le{d+n\choose2}$. If $2d\ge v\ge d+n+1$, then $e\ge\phi(v,d)\ge\phi(d+n+1,d)={d+n\choose2}+j$. If $v>2d$, then $e\ge\h dv> d^2=\phi(2d,d)>\phi(d+n+1,d)$.
\end{proof}

We used here the fact that, for fixed $d$, the quadratic function $\phi(v,d)$ is strictly increasing on the range $v\le 2d$. We do not know whether $\min E(v,d)$ is a monotonic function of $v$ (for fixed $d$). We can prove it is not strictly monotonic, as $\min E(14,6)=\min E(15,6)$; see the remarks at the end of the next section.

Gr\"unbaum was clearly aware that for $v>2d$, $\phi(v,d)$ cannot be equal to $\min E(v,d)$. Indeed it is a decreasing function of $v$ in this range. We settle the case of $2d+1$ vertices next.

\section{ Polytopes with $2d+1$ vertices}

We will define the {\it pentasm} in dimension $d$ as the Minkowski sum of a simplex and a line segment which is parallel to one 2-face,
but not parallel to any edge, of the simplex; or any polytope combinatorially equivalent to it. The same polytope is obtained if we truncate a simple vertex of the triplex $M_{2,d-2}$. In one concrete realisation, it is the convex hull of 0, $e_i$ for $1\le i\le d$ and $e_1+e_2+e_i$ for $1\le i\le d$, where $e_i$ are the standard unit vectors in $\R^d$.

\begin{figure}[h]
\centering
\includegraphics{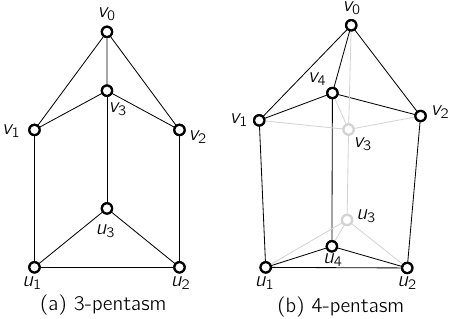}
\caption{Pentasms. (a) A 3-pentasm. (b) A 4-pentasm.}
\label{fig:pentasms}
\end{figure}

The pentasm has $2d+1$ vertices and $d^2+d-1$ edges.
 We will prove in  \cref{pentam} that $d^2+d-1$  is the minimum number of edges of a $d$-dimensional polytope with $2d+1$ vertices, for all $d$ {\it except} 4.
In particular, there is no 5-polytope with 11 vertices and 28 edges.

In general, we can label the vertices of any pentasm as $u_1,\ldots,u_d$, $v_0,v_1,\ldots,v_d$, in such a way that the edges are $[u_i,v_i]$ for $1\le i\le d$, $[u_i,u_j]$ for $1\le i<j\le d$ and $[v_i,v_j]$ for $0\le i<j\le d$ except when $(i,j)=(1,2)$.
The $d$-dimensional pentasm has  precisely $d+3$ facets:
\begin{enumerate}
\item $d-2$ pentasms of lower dimension (for each $i=3,4,\ldots,d$, the face generated by all vertices except $u_i$ and $v_i$ is a pentasm facet),

\item two prisms (one generated by all vertices except $u_1,v_1,v_0$  and the other generated by all vertices except $u_2,v_2,v_0$),

\item and three simplices (one generated by all $u_i$, another generated by all $v_i$ except $v_1$, and the third generated by all $v_i$ except $v_2$.
\end{enumerate}

The facet-vertex graph, and hence the entire face lattice, is then not hard to describe. Two of the simplices intersect in a ridge, while third is disjoint from both. Each of the first two simplices intersects one prism in a ridge, and the other in a face of dimension $d-3$. Every other pair of distinct facets intersects in a ridge. See \cref{fig:pentasms}.

Another way to view the pentasm is as the convex hull of two disjoint faces: a pentagon (with vertices $u_1,v_1,v_0,v_2,u_2$), and a $(d-2)$-dimensional prism. From this, we can verify that its $m$-dimensional faces comprise

$${d\choose m+1}+{d+1\choose m+1}-{d-1\choose m-1}\qquad{\text{simplices}},$$

$${d-2\choose m-2}\qquad{\text{pentasms, and}}$$

$${d-2\choose m}+2{d-2\choose m-1}={d-1\choose m}+{d-2\choose m-1}\qquad{\text{prisms}}.$$

Adding these up, we conclude
$$f_m(P)={d+1\choose m+1}+{d\choose m+1}+{d-1\choose m}$$
for a $d$-pentasm $P$ and $m\ge1$.

\begin{lemma}\label{pentasm}

Let $P$ be a $d$-polytope with $2d+1$ vertices, and $F$ a facet of $P$ which is a pentasm. Suppose that every vertex in $F$ belongs to only one edge not in $F$. Then $P$ is also a pentasm.
\end{lemma}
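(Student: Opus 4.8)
The plan is to locate the vertices outside $F$, fix their degrees and adjacencies, and then force them to attach to $F$ exactly as the last pair $u_d,v_d$ attaches in a $d$-pentasm. Since $F$ is a $(d-1)$-pentasm it has $2(d-1)+1=2d-1$ vertices, so exactly two vertices of $P$, say $a$ and $b$, lie outside $F$. The hypothesis is precisely the condition of \cref{shp} (so $P$ is in any case decomposable, consistently with being a pentasm), but what I really use is that each of the $2d-1$ vertices of $F$ sends exactly one edge out of $F$, to $a$ or to $b$. Hence there are exactly $2d-1$ edges between $F$ and $\{a,b\}$, and the vertices of $F$ split as a disjoint union $A\sqcup B$, where $A$ and $B$ are the neighbours of $a$ and of $b$ in $F$. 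If $ab$ were not an edge then $\deg a+\deg b=2d-1<2d$, impossible; so $ab$ is an edge and $\deg a+\deg b=2d+1$, forcing $\{\deg a,\deg b\}=\{d,d+1\}$. Labelling so that $\deg a=d$ gives $|A|=d-1$ and $|B|=d$. (Incidentally this already pins $e(P)=d^2+d-1$, so $P$ attains the minimum.)

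The heart of the argument concerns the three simplex facets of $F$: the distinguished $u$-simplex $U=\{u_1,\dots,u_{d-1}\}$, which is disjoint from the two $v$-simplices $V\setminus\{v_1\}$ and $V\setminus\{v_2\}$, where $V=\{v_0,\dots,v_{d-1}\}$. I claim that each such simplex $T$ lies entirely in $A$ or entirely in $B$. Being a facet of $F$, $T$ is a ridge of $P$, hence lies in exactly one facet $G\ne F$; and since $T$ is already maximal in $F$ we have $G\cap F=T$, so the vertices of $G$ are $T\cup W$ with $\emptyset\ne W\subseteq\{a,b\}$. The case $W=\{a,b\}$ is impossible for $d\ge4$: as $G$ is a $(d-1)$-polytope every vertex of $G$ has degree $\ge d-1$, yet the neighbours of $a$ inside $G$ lie in $(A\cap T)\cup\{b\}$ and those of $b$ lie in $(B\cap T)\cup\{a\}$, forcing $|A\cap T|\ge d-2$ and $|B\cap T|\ge d-2$, contradicting $|A\cap T|+|B\cap T|=|T|=d-1$. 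Hence $W$ is a single vertex $w$, so $G=\operatorname{conv}(T\cup\{w\})$ is a $(d-1)$-polytope with $d$ vertices, i.e.\ a simplex, and therefore $w$ is adjacent to every vertex of $T$; that is, $T\subseteq A$ when $w=a$ and $T\subseteq B$ when $w=b$.

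A short counting argument now determines the attachment. Each of $U$, $V\setminus\{v_1\}$, $V\setminus\{v_2\}$ has $d-1$ vertices while $|A|=d-1$, so any of them contained in $A$ must equal $A$; as they are distinct, at most one lies in $A$. If none did, then $V\setminus\{v_1\}$ and $V\setminus\{v_2\}$ would both lie in $B$, giving $V\subseteq B$ and, by $|V|=|B|=d$, the equality $B=V$; but then $U\subseteq B=V$, impossible since $U$ and $V$ are disjoint and nonempty. So exactly one of the three lies in $A$. If that one were a $v$-simplex, say $A=V\setminus\{v_1\}$, then $U$ and $V\setminus\{v_2\}$ would both lie in $B$; these are disjoint sets of $d-1$ vertices each, so $|B|\ge 2d-2>d$, a contradiction. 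Therefore $A=U$, and consequently $B=V$.

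Thus $a$ is joined precisely to $u_1,\dots,u_{d-1}$ and to $b$, while $b$ is joined precisely to $v_0,\dots,v_{d-1}$ and to $a$. Setting $a=u_d$ and $b=v_d$ reproduces exactly the edges $[u_i,v_i]$, $[u_i,u_j]$ and $[v_i,v_j]$ (for $(i,j)\ne(1,2)$) of a $d$-pentasm; extending each facet of $F$ by the vertex dictated by these adjacencies recovers the $d+3$ facets listed earlier for the pentasm, so $P$ is a pentasm. The step I expect to be the crux is the dichotomy in the second paragraph---excluding a second facet through a simplex ridge that contains \emph{both} outside vertices---since it is exactly there that the genuine geometry of $P$ (minimum degree in a facet, and the ``two facets per ridge'' property) must be invoked: the purely combinatorial data do not by themselves rule out the split attachments. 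The small cases $d\le 3$, where $F$ has no prism part and the three-simplex description degenerates, are checked directly.
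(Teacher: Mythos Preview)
Your core argument is correct and matches the paper's: both hinge on showing, via a degree count inside the second facet, that the ``other'' facet through each simplex ridge of $F$ contains only one of the two outside vertices, and then pinning down which. Your elimination by cardinalities ($|A|=d-1$, so any simplex facet contained in $A$ must equal $A$, etc.) is a pleasant variant of the paper's observation that the two $v$-simplices share vertices and must therefore attach to the same outside vertex.

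The gap is your last paragraph. Knowing the graph of $P$ does not determine its face lattice, since $b$ is not simple; and ``extending each facet of $F$ by the vertex dictated by these adjacencies'' is not what happens for the non-simplex ridges, where the other facet contains \emph{both} $a$ and $b$. What you are missing is the paper's opening step: every facet $G\ne F$ meets $F$ in a facet of $F$. If $\dim(G\cap F)\le d-3$, then $G$, being $(d-1)$-dimensional with all but at most two vertices lying in the face $G\cap F$, is a $2$-fold pyramid with apices $a,b$; hence every vertex of $G\cap F$ is adjacent to both $a$ and $b$, contradicting the hypothesis. (Your degree argument for the simplex ridges is essentially a special case of this.) Once this is established, $P$ has exactly $d+3$ facets---$F$ together with one ``other'' facet per facet of $F$---and each can be read off from the adjacencies $A=U$, $B=V$: the three simplex ridges extend to simplices as you showed, while each prism or pentasm ridge $T$ (which contains both $u$'s and $v$'s, so neither pyramid $T\cup\{a\}$ nor $T\cup\{b\}$ is possible) extends by both $a$ and $b$ to the corresponding prism or pentasm of one higher dimension. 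With that, the face lattice is that of the $d$-pentasm.
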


\begin{proof}
This is easy to see if $d\le3$, so assume $d\ge4$.

Denote by $x$ and $y$ the two vertices of $P$ outside $F$. Let $G$ be any other facet of $P$; we claim that $G$ must intersect $F$ in a ridge.  Since at most two vertices of $G$ lie outside $G\cap F$, the dimension of $G\cap F$ must be at least $d-3$. If its dimension were exactly $d-3$, then $G$ would be  2-fold pyramid over this subridge, with apices $x$ and $y$. But then every vertex in $G\cap F$ would be adjacent to both $x$ and $y$, contrary to hypothesis.

Since $d-1\ge3$, $F$ has precisely three simplex facets (which are ridges in $P$); let $S$ be any one of them, and denote by $G$ the other facet of $P$ containing $S$. We claim that $G$ cannot contain both $x$ and $y$. Otherwise the number of edges of $G$ containing $x$ or $y$ would only be $d$, which is absurd. Thus either every vertex in $S$ is adjacent to $x$, or every vertex in $S$ is adjacent to $y$.

It follows that one of $x,y$ is adjacent to all $d$ vertices in the two intersecting simplex facets of $F$, while the other is adjacent to all $d-1$ vertices in the other simplex facet. This enables us to describe the entire face lattice, and show that $P$ is a pentasm.

From the first paragraph, we know that every facet of $P$, besides $F$, is the ``other facet" corresponding to a ridge contained in $F$. For any of the three simplex $(d-2)$-faces contained in $F$, the other facet will contain precisely one of $x,y$, and so will be another simplex. For a prism ridge, the other facet must contain both $x$ and $y$, with each adjacent to all and only the vertices in one simplex subridge of the prism. Likewise, for a pentasm ridge, the other facet must contain both $x$ and $y$. This completely describes the vertex-facet relationships of $P$, and they correspond to those of a pentasm, as enumerated at the beginning of this section.
\end{proof}

We will see shortly that the pentasm is the unique minimiser of the number of edges, for $d$-polytopes with $2d+1$ vertices, provided $d\ge5$.  For smaller $d$,  we can exhibit now two other minimisers which are sums of triangles.

For $m,n>0$, the polytope $\Delta_{m,n}$   is be defined as the sum of an $m$-dimensional simplex and an $n$-dimensional simplex, lying in complementary subspaces. It is easy to see that it has dimension $m+n$, $(m+1)(n+1)$ vertices, $m+n+2$ facets, and is simple.  Moreover  $\Delta_{d-1,1}$ is combinatorially equivalent to the prism $M_{d,0}$. For now, we are only interested in $\Delta_{2,2}$, because it has the same number of vertices but fewer edges than the 4-dimensional pentasm. It is illustrated in \cref{fig:sumsoftriangles} (a); the labels on the vertices are needed for the following proof.

The other example, illustrated in \cref{fig:sumsoftriangles} (b), is a certain hexahedron which can be expressed as the sum of two
triangles. We will call it $\Sigma_3$; one concrete realisation of it is given by the convex hull of
$\{0, e_1, e_2, e_1+e_2,  e_1+e_3, e_2+e_3, e_1+e_2+2e_3\}$. This is the first in a sequence of $d$-polytopes $\Sigma_d$ which each can be expressed as the sum of two $(d-1)$-dimensional simplices. The higher dimensional versions have $3d-2$ vertices, only one of which is not simple; these will studied elsewhere \cite{PUY}.
Gr\"unbaum also used this as an example; a Schlegel diagram of it appears as \cite[Figure  10.4.2]{G}.

\begin{figure}[h]
\centering
\includegraphics{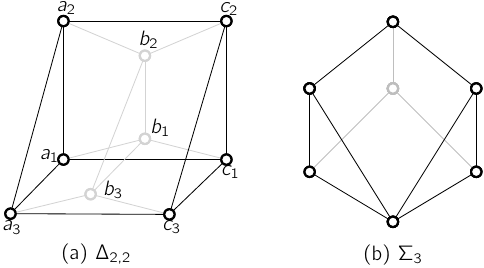}
\caption{Sums of triangles}
\label{fig:sumsoftriangles}
\end{figure}

\begin{lemma}\label{lowdim}
(i) $\Sigma_3$ is not a facet of any 4-polytope with 19 edges.

(ii) $\Delta_{2,2}$ is not a facet of any 5-polytope with 29 edges.

\end{lemma}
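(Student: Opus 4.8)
The plan is to treat parts (i) and (ii) in parallel, since they have the same shape. Write $F$ for the prescribed facet ($\Sigma_3$ in (i), $\Delta_{2,2}$ in (ii)) and $d=\dim P$ ($4$ in (i), $5$ in (ii)), and record the combinatorial data: $\Sigma_3$ has $7$ vertices, $11$ edges, and exactly one non-simple vertex (of degree $4$), while $\Delta_{2,2}$ is simple with $9$ vertices and $18$ edges. Throughout I use that an edge of $P$ with both endpoints in $F$ is automatically an edge of $F$, so $e(P)$ splits as $e(F)$ plus the number of edges meeting the complement of $F$.

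First I would pin down the number of vertices of $P$. Since every vertex has degree at least $d$, we have $2e(P)\ge d\,v(P)$; thus $e(P)=19$ forces $v(P)\le 9$ in (i) and $e(P)=29$ forces $v(P)\le 11$ in (ii), while obviously $v(P)\ge v(F)+1$. If exactly one vertex lay outside $F$, then $P$ would be a pyramid over $F$ and $e(P)=e(F)+v(F)$ would equal $18$ in (i) and $27$ in (ii) --- neither the target value. Hence in both cases $P$ has \emph{exactly two} vertices $x,y$ outside $F$, and the number of edges meeting $\{x,y\}$ is $e(P)-e(F)$, namely $8$ in (i) and $11$ in (ii). Note that \cref{outside} already guarantees at least $2d-1$ such edges (that is, $7$ and $9$), so mere edge-counting leaves a slack of one edge in (i) and two in (ii): the impossibility must come from facial structure, not from counting alone.

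Next comes the degree bookkeeping. Put $a=\deg_P x$, $b=\deg_P y$, and let $\epsilon=1$ or $0$ according as $xy$ is or is not an edge; then the count above reads $a+b-\epsilon=8$ (resp.\ $11$), with $a,b\ge d$. Moreover every simple vertex of $F$ must gain a new edge in $P$, i.e.\ must be joined to $x$ or $y$: in (i) this applies to the six degree-$3$ vertices, in (ii) to all nine vertices. These relations cut the possibilities down to a short list. In (i): either $a=b=4,\ \epsilon=0$, or $\{a,b\}=\{4,5\},\ \epsilon=1$. In (ii): either $\{a,b\}=\{5,6\},\ \epsilon=0$, or $\{a,b\}\in\{\{5,7\},\{6,6\}\},\ \epsilon=1$.

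The heart of the argument, and the step I expect to be the main obstacle, is eliminating each surviving case by facial structure. Here I would exploit two facts. First, any exterior vertex of degree exactly $d$ is simple, so its vertex figure is a simplex; consequently every pair of its $d$ neighbours lies on a common $2$-face of $P$ through it, and every triple spans a facet. Since all these neighbours lie in $F$, this pins down how many and which small faces of $F$ (the triangular $2$-faces of $\Sigma_3$, the triangular-prism ridges of $\Delta_{2,2}$) must be ``capped'' by $x$ or $y$. Second, each ridge of $P$ contained in $F$ --- that is, each facet of $F$ --- lies in exactly one further facet $G$ of $P$, which must then contain the corresponding exterior vertices. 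Matching the admissible neighbour-sets of $x$ and $y$ against the explicit incidence structure of $\Sigma_3$ and $\Delta_{2,2}$ --- in particular the fact that both triangular $2$-faces of $\Sigma_3$ pass through its unique non-simple vertex --- should show that the simple vertices of $F$ cannot all be covered in a way consistent with $P$ being a polytope having $F$ as a facet. This contradiction rules out every case and so establishes both parts. Verifying that \emph{no} admissible configuration survives is the delicate, case-laden part of the proof.
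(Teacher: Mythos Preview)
Your outline follows the paper's route: pin down exactly two exterior vertices, tabulate their degrees, and then kill each configuration by examining the ``other facet'' through each ridge of $F$. Two ingredients that the paper uses would tighten your argument substantially.

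First, the $\epsilon=0$ cases are spurious. It is a standard fact (invoked explicitly in the paper) that removing all vertices of a proper face from the graph of a polytope leaves a connected subgraph; with only $x$ and $y$ remaining, they must be adjacent. This halves your case list in both parts and is worth citing rather than re-deriving by hand.

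Second---and this is the real gap---in part~(ii) your two declared tools do not cover the subcase $a=b=6$, since then neither exterior vertex is simple and the vertex-figure argument is unavailable. The paper handles (ii) differently: from the edge count one sees that at most one vertex of $F$ can be non-simple in $P$, so one can choose two prism facets $R,R'$ of $\Delta_{2,2}$ whose six vertices are all simple in $P$. For such an $R$, the other facet $G\supset R$ is shown (by a short degree argument) to contain both $x$ and $y$; it then has $8=2\cdot4$ vertices and is simple, so \cref{decomp} forces $G$ to be a simplicial $4$-prism. Once both ``other facets'' are prisms, the two resulting adjacency patterns for $x$ and $y$ against the labelled vertices $a_i,b_i,c_i$ are incompatible. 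Your sketch never invokes \cref{decomp}, and without it I do not see how you close out the case where neither exterior vertex is simple.

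One small slip: ``every triple spans a facet'' is right only in part~(i). In part~(ii) a simple vertex has a $4$-simplex as vertex figure, so it is each \emph{quadruple} of its five neighbours that lies in a common facet through it.
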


\begin{proof}
(i) Let us consider the possibility that $P$ is such a polytope, i.e. it has a facet $F$ of the type $\Sigma_{3}$.

A pyramid over $\Sigma_3$ would have 18 edges, so $P$ is not such a pyramid over $F$. Any 4-polytope with 10 or more vertices has more than 20 edges. So there are exactly two vertices of $P$ outside $F$, which we denote by $x$ and $y$; they must be adjacent. (It is well known that removal of an entire face from the graph of a polytope does not disconnect it \cite[Theorem 15.5]{Br}.) Since 19=11+7+1,  every vertex in $F$ belongs to only one edge not in $F$, i.e. is adjacent to exactly one of $x,y$.

Given a triangular ridge in $F$, what is the other facet containing it? It must have either four or five vertices, and each vertex in the ridge must have degree three in this facet; a simplex is the only possibility. This implies that the five vertices in the two triangles in $F$ are all adjacent to the same external vertex, say to $x$. But then $y$ can be adjacent only to the other two vertices in $F$, and so will have degree only three.

(ii) Let us consider the possibility that $P$ is such a polytope, with $\Delta_{2,2}$ as a facet, say $F$. We may label the vertices of $F$ as $a_1, a_2, a_3, b_1, b_2, b_3, c_1, c_2, c_3$ in such a way that two vertices are adjacent if and only if they share a number or a letter; see \cref{fig:sumsoftriangles} (a).
Then the facets of $F$ (which are ridges in $P$) are the convex hulls of the six subsets which either omit one number or omit one letter. Accordingly we will denote them as $R_{12}$, $R_{13}$, $R_{23}$, $R_{ab}$, $R_{ac}$ and $R_{bc}$.

 Again $P$ cannot be a pyramid over $F$, and any 5-polytope with 12 or more vertices has at least 30 edges. So there are exactly two (necessarily adjacent) vertices of $P$ outside $F$, which we denote by $x$ and $y$. If two of the vertices in $F$ are not simple in $P$, there will be at least 11 edges between $F$ and $x,y$, hence at least 30 edges in $P$.

 Thus all but at most one of the vertices in $F$ are simple in $P$; we may assume that all vertices of $F$ except perhaps $c_3$ are simple in $P$. This means that all vertices in $R_{ab}$  and $R_{12}$ are simple in $P$. Suppose the ``other facet" containing one of these ridges is a pyramid thereover, say the convex hull of $R_{ab}$ and $x$. Then none of the vertices $R_{ab}$  can be adjacent to $y$, leading to the absurdity that $y$ has degree at most 4.

This leaves us with the case when the ``other facets" for  $R_{ab}$  and  $R_{12}$ both contain both $x$ and $y$.  \cref{decomp} ensures that both facets are 4-prisms. Considering $R_{ab}$, we can suppose that $x$ is adjacent to $a_1, a_2, a_3$ and $y$ is adjacent to $b_1, b_2, b_3$. Considering likewise $R_{12}$, one of $x,y$ must be adjacent to $a_1, b_1, c_1$ while the other is adjacent to $a_2, b_2, c_2$. These conditions are  incompatible.\end{proof}

\begin{lemma}\label{3outside} Let $P$ be a $d$-polytope with $2d+1$ vertices and no more than $d^2+d-1$ edges, and suppose $P$ contains a facet $F$ with exactly $2d-2$ vertices. Then $F$ is a prism.  If $d\ge5$, then $P$ also contains a facet with $2d-1$ vertices.
\end{lemma}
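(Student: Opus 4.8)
The plan is to count edges, using the facet $F$ to split the count, and then to pin down the extremal configurations.

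First I would note that, since $|F|=2d-2$ and $P$ has $2d+1$ vertices, there are exactly three vertices $x,y,z$ outside $F$. As $F$ is a $(d-1)$-polytope with $2(d-1)$ vertices, Theorem~\ref{big} (equivalently Theorem~\ref{excess}(iii)) gives $f_1(F)\ge\phi(2(d-1),d-1)=(d-1)^2$, with equality precisely when $F$ is a prism. Lemma~\ref{outside}, applied to $\{x,y,z\}$ (valid since $3\le d$), bounds the edges meeting $\{x,y,z\}$ below by $3d-3$. Since every edge either lies in $F$ or meets $\{x,y,z\}$, adding gives $f_1(P)\ge(d-1)^2+(3d-3)=d^2+d-2$, so the hypothesis $f_1(P)\le d^2+d-1$ forces $f_1(F)\le(d-1)^2+1$. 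If $f_1(F)=(d-1)^2$ we are done by the equality case of Theorem~\ref{big}: $F$ is a prism.

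The crux is therefore to eliminate the borderline case $f_1(F)=(d-1)^2+1$. There every inequality above is within one of equality, so $f_1(P)=d^2+d-1$ and Lemma~\ref{outside} is attained; the latter forces $x,y,z$ to be pairwise adjacent and each simple, hence each with exactly $d-2$ neighbours in $F$. I would exploit simplicity thus: the vertex figure of $x$ is a simplex, so any two neighbours of $x$ lie in a common $2$-face through $x$; applying this to $y,z$ produces a $2$-face $Q\ni x,y,z$. As $[y,z]$ is an edge of $P$ with both endpoints in the face $Q$, it is an edge of $Q$, and similarly for $[x,y],[x,z]$; hence $Q=\mathrm{conv}\{x,y,z\}$ is a triangular $2$-face $T$. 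Counting the facets through each of $x,y,z$ (each simple, hence in $d$ facets) by inclusion--exclusion then shows $P$ has exactly $d+2$ facets, one being $F$. The remaining task, and the main obstacle, is to show that a facet with $2d-2$ vertices and excess $2$ cannot coexist with this rigid global structure. (A sanity check: for $d=4$ the mutually-adjacent-simple configuration does occur, namely $\Delta_{2,2}$, but there $f_1(F)=(d-1)^2$, so $F$ is a prism and this does \emph{not} fall into the borderline case.) I expect to close this by analysing the links of $T$ and of the codimension-two faces of $F$, or by dualising to a $d$-polytope with $d+2$ vertices and invoking the classification of such polytopes.

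For the second statement assume $d\ge5$, so that $F$ is a prism by the first part. Its ridges are two simplices, each with $d-1$ vertices, and $d-1$ smaller prisms, each with $2d-4$ vertices. Each ridge of $F$ lies in exactly one further facet of $P$; if $R$ is one of the $(2d-4)$-vertex side prisms and $H$ is the other facet through $R$, then $H\cap F=R$ (the only faces of $F$ containing the facet $R$ are $R$ and $F$, and $H\ne F$), so $H$ is $R$ together with whichever of $x,y,z$ it contains, whence $|H|=(2d-4)+|H\cap\{x,y,z\}|\le2d-1$, with equality exactly when $H$ contains all three outer vertices. Thus it suffices to produce one side prism whose partner facet absorbs $x,y,z$. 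I would argue by contradiction: if every such partner omitted one of $x,y,z$, then combining the adjacencies between $\{x,y,z\}$ and the two bases of $F$ with the bound $f_1(P)\le d^2+d-1$ would force the outer vertices to attach to $F$ in the balanced, $\Delta_{2,2}$-like pattern that is realisable only when $d=4$. Excluding this for $d\ge5$ is exactly where the dimension hypothesis is used and is the substantive point here; the surrounding bookkeeping is then routine.
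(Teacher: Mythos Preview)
Your route to the first conclusion is genuinely different from the paper's and, had you carried it out, would work nicely. The paper dispatches the borderline case $f_1(F)=(d-1)^2+1$ by quoting an external result (\cite[Theorem~14]{PYmore}: a $(d-1)$-polytope with $2(d-1)$ vertices and $\le(d-1)^2+1$ edges is a prism unless $d-1=3$), with a decomposability argument via Theorem~\ref{shp} and \cite{Mc} for $d=4$. Your idea---show $x,y,z$ are simple and span a triangular $2$-face, deduce $f_{d-1}(P)=d+2$, then classify---can be completed as follows: $P$ is not a pyramid (else $f_1(P)\ge d^2+d$), so by Lemma~\ref{dplus2facets} $P\cong\Delta_{r,s}$ with $r+s=d$ and $rs=d$; its facets then have $d+r$ or $d+s$ vertices, so a facet with $2d-2$ vertices forces (say) $r=d-2$, whence $(d-2)s=d$, impossible for $d\ge5$; for $d=4$ one gets $\Delta_{2,2}$, whose facets are prisms with nine edges, not ten. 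So the borderline case simply does not occur. But you stop at ``I expect to close this by\ldots'', which is not a proof; the argument above is what is needed.

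For the second conclusion your sketch has two real gaps. First, you never treat the case $|H|=2d-3$, i.e.\ $H$ a pyramid over the side prism $R$: here $F\cup H$ already contains $(d-1)^2+(2d-4)=d^2-3$ edges, and the remaining two outer vertices contribute at least $2d-1$ more, giving $f_1(P)\ge d^2+2d-4>d^2+d-1$ for $d\ge4$. Second, your contradiction for $|H|=2d-2$ is only an analogy (``$\Delta_{2,2}$-like''), whereas the actual mechanism is a pigeonhole. When $|H|=2d-2$, Part~1 forces $H$ to be a prism, so the two outer vertices it contains are each adjacent to one $(d-2)$-simplex of $R$; a short case analysis of the $3d-3$ or $3d-2$ edges meeting $\{x,y,z\}$ (which you omitted) shows that in every adjacent outer pair at least one member has exactly $d-2$ neighbours in $F$, and those neighbours must then be precisely that simplex, which determines the ridge $R$. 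Hence each of the at most three outer pairs is associated to at most one side-prism ridge; since $F$ has $d-1\ge4$ such ridges, some partner facet must swallow all of $x,y,z$.
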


\begin{proof} A 2-face with four vertices is  necessarily a prism, so assume now $d>3$.
By \cref{outside}, the three vertices outside $F$ must belong to at least $3d-3$ edges, so there are at most $d^2-2d+2=(d-1)^2+1$ edges in $F$.  Considering the degrees of its vertices, we see that $F$ has at least $(d-1)^2$ edges, and so there are either $3d-3$ or $3d-2$ edges outside $F$.

If there are  $3d-2$ edges outside $F$, then $F$ can only have  $(d-1)^2$ edges, and so must be a prism by \cref{decomp}.

If there are  $3d-3$ edges outside $F$, then $F$ has at most  $(d-1)^2+1$ edges, and $2(d-1)$ vertices. According to \cite[Theorem 13]{PYmore}, either $F$ is a prism, or $d-1=3$. In the latter case, $F$ has six vertices and there are six edges running out of $F$; by  \cref{shp}, $P$ must be decomposable. A special case of \cite[Theorem 2]{Mc} (in which the family has just one element) asserts that a polytope must be indecomposable if it has an indecomposable facet which intersects every other facet. Since no facet is disjoint from $F$, this result ensures that $F$ is also decomposable. Thus $F$ is a prism in this case as well.

We claim now that, whenever two of the three vertices outside $F$ are adjacent, one of them is adjacent to only $d-2$ vertices in $F$.
In case there are $3d-3$ edges outside $F$ this is clear, as each of the three vertices outside $F$ must  be adjacent to the other two, and to exactly $d-2$ vertices in $F$. If there are  $3d-2$ edges outside $F$, then either the three vertices outside $F$ are adjacent to one another, two of them are adjacent to $d-2$ vertices in $F$, and the third is adjacent to  $d-1$ vertices in $F$; or  two of them are each adjacent to $d-1$ vertices in $F$ but not to each other, and  the third is adjacent to both of them and to $d-2$ vertices in $F$. The claim is also clear in either of these cases.

There are $d+1$ ridges of $P$ contained in $F$, of which $d-1$ are prisms. Choose one such ridge $R$.  Then $R$ is the convex hull of two simplices, each containing $d-2$ vertices. Now consider the other facet containing $R$. If it were a pyramid over $R$, the union of these two facets would contain $d^2-3$ edges, while the remaining two vertices must be incident to at least $2d-1$ edges. Since $2d-4>d-1$, this cannot be. If it contains two of the three vertices outside $F$, it must  also be a prism, with each of these two vertices connected to all $d-2$ vertices in one of the simplices just mentioned. But each such pair of vertices outside $F$ can, according to the claim in the preceding paragraph, only be associated in this manner to one such ridge. Since there are at most three such pairs, and $F$ contains  $d-1$ such ridges,  the assumption $d\ge5$ will imply that there is a ridge whose ``other facet"  contains all three vertices outside $F$, i.e. this facet has $2d-1$ vertices altogether.
\end{proof}

This lemma raises the next question; we will see it again in  \cref{prism}.

\begin{question}
What can be said about polytopes in which every facet is either a prism or a simplex? \end{question}

\begin{theorem}\label{pentam} The $d$-polytopes with $2d+1$ vertices and $d^2+d-1$ or fewer edges are as follows.

(i) For $d=3$, there are exactly two polyhedra with 7 vertices and 11 edges; the
pentasm, and $\Sigma_3$. None have fewer edges.

(ii) For $d=4$, a sum of two
triangles $\Delta_{2,2}$ is the unique polytope with 18 edges, and the pentasm is the unique polytope with 19 edges.  None have fewer edges.

(iii) For $d\ge5$,  the pentasm is the unique $d$-polytope with $d^2+d-1$ edges. None have fewer edges.

\end{theorem}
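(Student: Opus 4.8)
The plan is to prove all three parts simultaneously by induction on $d$, treating $d=3$ and $d=4$ as base cases and using them to drive the inductive step for $d\ge5$. The two base cases are essentially finite problems: for $d=3$ one uses Euler's formula to see that a $3$-polytope with $7$ vertices has at least $11$ edges and then enumerates the equality cases, while for $d=4$ one combines a direct analysis of $9$-vertex $4$-polytopes with \cref{lowdim}(i), which forbids a $\Sigma_3$ facet in any $4$-polytope with $19$ edges, to separate the two minimisers $\Delta_{2,2}$ and the pentasm. The substance of the theorem is the inductive step for $d\ge5$: showing $e\ge d^2+d-1$ and that equality forces $P$ to be a pentasm.

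For the inductive step, fix a facet $F$ and let $n$ be the number of vertices of $P$ lying outside $F$, so $1\le n\le d+1$. When $3\le n\le d$, I would bound the edges of $F$ from below by $\phi(2d+1-n,d-1)$ using \cref{big}, and the edges meeting the outside vertices from below by $nd-\binom{n}{2}$ using \cref{outside}; the identity in \cref{boring} (with $k=d+1$) then collapses these into the clean estimate
$$e\ \ge\ \phi(2d+1,d)+(d+1-n)(n-2)\ =\ d^2+(d+1-n)(n-2).$$
A short calculation shows $(d+1-n)(n-2)\ge d-1$ whenever $4\le n\le d-1$ and $d>5$, so any such facet already forces $e>d^2+d-1$. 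Thus, apart from the delicate threshold $d=5$, a polytope with at most $d^2+d-1$ edges can only have facets with $n\in\{1,2,3\}$ or $n\in\{d,d+1\}$, i.e. facets with $2d-2$, $2d-1$ or $2d$ vertices, or else simplices and near-simplices.

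Next I would show that such a polytope must in fact possess a facet with $2d-1$ vertices. Pyramids ($n=1$) are disposed of by a direct edge estimate on the base, and the possibility that every facet is small (all $n\ge d$, so every facet has at most $d+1$ vertices) is excluded by the excess-degree bound of \cref{excess}(ii): a near-simplicial polytope with $2d+1$ vertices has far more than $d^2+d-1$ edges. This leaves a facet with $2d-2$ or $2d-1$ vertices; in the former case \cref{3outside} tells us $F$ is a prism and, crucially for $d\ge5$, that $P$ also has a facet with $2d-1$ vertices. That facet $G$ is a $(d-1)$-polytope on $2(d-1)+1$ vertices, so the inductive hypothesis identifies it from its edge count as a $(d-1)$-pentasm, once the exceptional minimisers are ruled out: \cref{lowdim}(ii) excludes a $\Delta_{2,2}$ facet in the $d=5$ case (where $d^2+d-1=29$), and the lower-dimensional exclusions handle $\Sigma_3$.

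The endgame is \cref{pentasm}: given a pentasm facet $G$ with just two outside vertices $x,y$, I must verify its hypothesis that every vertex of $G$ lies on exactly one edge leaving $G$. Here the key is a degree-sum count: the edges leaving $G$ number $\deg x+\deg y-[xy\text{ an edge}]\ge 2d-1$, and equality forces $xy$ to be an edge, $\deg x=\deg y=d$, and hence some vertex of $G$ adjacent to neither $x$ nor $y$ — a configuration I would show is incompatible with $G$ being a pentasm and $e\le d^2+d-1$, thereby both recovering the final edge ($e\ge d^2+d-1$) and establishing the simple-attachment hypothesis. \cref{pentasm} then yields that $P$ is a pentasm. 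I expect this last step — the bookkeeping that converts the ``one edge short'' estimate $d^2+d-2$ into the sharp bound while simultaneously pinning down the combinatorics of the attachment — to be the main obstacle, with the threshold dimension $d=5$ (whose inductive base is the anomalous dimension $4$) requiring the most careful separate attention.
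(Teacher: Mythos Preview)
Your plan matches the paper's proof almost step for step: the same inductive skeleton, the same facet-by-facet case analysis using \cref{big}, \cref{outside}, \cref{3outside} and \cref{excess}(ii), the same special handling of $d=5$ via \cref{lowdim}(ii), and the same endgame through \cref{pentasm}. Two places in your sketch need sharpening, however.

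First, the case ``every facet has at most $d+1$ vertices'' is not disposed of by \cref{excess}(ii) alone. That result requires every $2$-face to be a triangle, and a $(d-1)$-polytope with $d+1$ vertices can have a quadrilateral $2$-face --- precisely when it is $M_{2,d-3}$. The paper deals with this explicitly: if some facet is $M_{2,d-3}$, the ridge $M_{2,d-4}$ it contains forces a second $M_{2,d-3}$ facet, and then a direct edge count using \cref{outside} on the remaining $d-1$ vertices gives at least $d^2+2d-3$ edges. You should not skip this.

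Second, your ``one edge short'' worry in the endgame dissolves if you count edges leaving the pentasm facet $G$ from $G$'s side rather than from $\{x,y\}$'s side. Every vertex of a facet has at least one edge leaving that facet (look at the vertex figure: the facet contributes a facet of the vertex figure, which cannot contain all its vertices). Hence there are at least $|V(G)|=2d-1$ edges from $G$ to $\{x,y\}$; together with the edge $xy$ (which must exist, since removing a face does not disconnect the graph) this gives at least $2d$ edges outside $G$. Thus $G$ has at most $d^{2}-d-1$ edges, so by induction $G$ is a pentasm with exactly $d^{2}-d-1$ edges, exactly $2d-1$ edges leave $G$, and every vertex of $G$ has \emph{exactly} one edge leaving --- which is precisely the hypothesis of \cref{pentasm}. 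No further combinatorial analysis of ``a vertex adjacent to neither $x$ nor $y$'' is needed.
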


\begin{proof}  We begin with some observations which are valid in all dimensions. First $P$ cannot be a pyramid. If it were, with some facet $F$ as its base, then $F$ would have $2d$ vertices and hence at least $\h(d-1)2d$ edges. Adding these up, $P$ would have at least $d^2+d$ edges, contrary to hypothesis. So there are at least two vertices of $P$ outside any facet.

Secondly, it is  not possible that every facet of $F$ has $d+1$ or fewer vertices; indeed any such polytope will have at least $d^2+2d-3$ edges. This follows from  \cref{excess}(ii) if every 2-face is a triangle. The only $(d-1)$-polytope with $d+1$ (or fewer) vertices and  a non-triangular face is $M_{2,d-3}$; suppose this is a facet. It contains  $M_{2,d-4}$ as a ridge, and the other facet containing this ridge can only be another copy of $M_{2,d-3}$. The union of these two facets contains ${d\choose2}-2+2d$ edges, and  \cref{outside} ensures that the $d-1$ vertices outside these 2 facets belong to at least $d(d-1)-{d-1\choose2}$ edges: summing these gives us $d^2+2d-3$.

In particular, we see that $P$ is not simplicial.

(i) Since every vertex must have degree at least three, a polytope with 7 vertices must have at least 11 edges. Suppose it has exactly 11 edges. A hexagonal pyramid has 12 edges, so every face must have at most five vertices. If some face is a pentagon, the equality 11=5+5+1 ensures that each vertex of the pentagon is adjacent to exactly one of the two other vertices, of which one is simple and one has degree four. The resulting graph is that of a pentasm. A simplicial polyhedron would have too many edges, so the remaining case is that one face is quadrilateral and that there are no pentagons.    There are two  possibilities to consider: either each vertex of the quadrilateral is adjacent to exactly one of the three other vertices, which are all adjacent to one another; or one vertex of the quadrilateral is adjacent to two of the three other vertices, which are both adjacent to the third vertex but not to each other. These two graphs are isomorphic, and coincide with the graph of $\Sigma_3$.

This can also be verified from examination of catalogues \cite{F,BD}.

(ii) It is known and easily checked that $\Delta_{2,2}$ is a simple polychoron with 9 vertices and 18 edges, and that a pentasm has  9 vertices and 19 edges.

Conversely, suppose $P$ is a 4-dimensional polytope  with 9 vertices and 18  or 19 edges. By our earlier remarks, there must be a facet with six or seven vertices.

Consider the possibility that no facet has seven vertices. \cref{3outside} then implies that at least one facet, say $F$, is a prism. If $Q$ is a quadrilateral ridge in $F$, the other facet $F'$ containing it cannot be a pyramid, as \cref{outside} would then force $P$ to have at least 20 edges. Thus $F'$ must also be a prism. Considering the three quadrilateral ridges in $F$, we see that there are three other facets of $P$ which are prisms. So, there is only one way the three vertices outside $F$ can be connected to form the graph of a polytope, and it is the graph of  $\Delta_{2,2}$. By simplicity, the entire face lattice is determined \cite[\S3.4]{Z}.

If some facet $F$ has seven vertices, the equation $19= 11+7+1$ tells us that $F$ has 11 edges, and that every vertex in $F$ belongs to only one edge not in $F$. Part (i) and  \cref{lowdim}(i) ensure that $F$ is a pentasm, and then  \cref{pentasm} ensures that $P$ is also a pentasm.

(iii) Now we proceed by induction on $d$, building on the case $d=4$.

Consider  first the possibility that there are between 4 and $d-1$ vertices outside some facet $F$. Then for some $k$ with {$3\le k\le d-2$}, we can say that $F$ has
$d-1+k$ vertices, and there are $d-k+2$ vertices outside $F$. By previous results, the number of edges in $F$ is then at least
$$\phi(d-1+k,d-1)$$
and there are at least
$$d(d-k+2)-{d-k+2 \choose 2}$$
edges outside $F$. Adding these up, the total number of edges is at least
$$d^2-1+dk+k-k^2=(d^2+d-1)+(k-2)(d-k-1)-1.$$
The positive integers $k-2$ and $d-k-1$ cannot both be equal to 1,  unless $d=5$ and $k=3$. Thus, with this exception, the number of edges is strictly more than $d^2+d-1$.

 Let us consider the case  $d=5$ and $k=3$, i.e. there is a facet with seven vertices. The four vertices outside belong to at least 14 edges, so there can only be 15 edges in the facet, which must be $M_{3,1}$ according to \cref{big}. This contains the prism $M_{3,0}$ as a ridge, and the
 other facet containing it has at least seven vertices. If it has exactly seven, the
only possibility which we need to consider for this other facet is  $M_{3,1}$. But then the union of these two facets contains 21 edges, and the three other vertices belong to at least 12 edges.  If the other facet has eight vertices, similar arithmetic shows that $P$ has at least 31 edges. If the other facet has nine vertices, then it must have at least 19 edges by  \cref{lowdim}(ii) and part (ii) above, forcing $P$ to have at least 30 edges. The existence of a facet with 10 vertices would also mean $P$ having at least 30 edges.

So  in all cases, no facet can have between $d+2$ and $2d-3$ vertices. The case that every facet has $d+1$ or fewer vertices has already been excluded, as has the case that $P$ is a pyramid.

So there is a facet with either $2d-2$ or $2d-1$ vertices. But the former possibility implies the latter, by  \cref{3outside}.

Now we can fix a facet $F$ with  exactly $2d-1=2(d-1)+1$ vertices; then  there are exactly two vertices of $P$ outside $F$. There must be at least $2d-1$ edges running out of $F$, and there must be an edge between the two external vertices. But then the total number of edges  in $F$ is at most
$d^2-d-1=(d-1)^2+(d-1)-1$. By induction, $F$ is a pentasm. (In case $d=5$, we must also apply  \cref{lowdim}(ii) again.) By  \cref{pentasm}, so is $P$.

\end{proof}

Summing up, we now see that $\min E(2d+1,d)=d^2+d-1$ for all $d\ne4$, and $\min E(9,4)=18$.

We  can show that $\min E(2d+2,d)=(d+3)(d-1)$ for all $d\ge6$; this was already known for $d=3$ or 4. This lies somewhat deeper than the results presented here; details will appear  elsewhere. The existence of a simple 5-polytope with 12 vertices, namely $\Delta_{2,3}$, is well known, so $\min E(12,5)=30$. Likewise, the existence of $\Delta_{2,4}$ shows that $\min E(15,6)=45=\min E(14,6)$.

\section{Stronger minimisation}

We have now evaluated $\min E(v,d)$ for $v\le2d+1$. It is well known that $\max E(v,d)={v\choose2}$ for all $v$ whenever $d\ge4$ (cyclic polytopes), and Steinitz showed that $\max E(v,3)=3d-6$ (again, for all $v$). Not content with knowing just its maximum and minimum, we now seek more information about the structure of $E(v,d)$. Is it a complete interval (of integers) or are some values missing?

When $v=d+1$, the only $d$-polytope is the simplex, so $E(d+1,d)=\{{d+1\choose2}\}$.
For $v=d+2$, Gr\"unbaum \cite[\S6.1]{G} described the situation   in detail. As ${v\choose2}-\min E(d+2,d)=2{v-d\choose2}=2$ when $d\ge4$, only three  values for $e$ are possible, and there exist $d$-polytopes exemplifying each of them. So $E(d+2,d)$ is a complete interval. (The corresponding  excess degrees are $d-2,d$ and $d+2$.)

For $d=3$, it is a well known consequence of Steinitz' work \cite[\S10.3]{G} that $E(6,3)=[9,12]$. For $d\ge4$, it is easy to check that $E(d+3,d)$ is also a complete interval. (The seven values in this interval correspond to the (even) excess degrees from $2d-6$ to $2d+6$.)

\begin{proposition} If $d\ge4$ and $x$ is an integer between $d-3$ and $d+3$, then there is a $d$-dimensional polytope with $d+3$ vertices and $x+\h d(d+3)$ edges.
\end{proposition}

\begin{proof}
By induction on $d$. The base case $d=4$ is due to Gr\"unbaum \cite[\S10.4]{G}. The inductive step is easy; a pyramid over an example with dimension one less always works.
\end{proof}

For the case of $d+4$ vertices, this straightforward situation no longer holds. {Gr\"unbaum \cite[Sec.~10.4]{G}} first noticed this, proving that $E(8,4)=\{16\}\cup[18,28]$. We show that the non-existence of a $4$-polytope with 8 vertices and 17 edges is not an isolated curiosity, but the beginning of a family  of natural gaps in the collection of $f$-vectors. In particular, we show  that a $d$-polytope with $v = d + 4$ or $d+5$ vertices cannot have $\phi(v, d) + 1$ edges. In other words, if our polytope is not a triplex, then it has at least two more edges than the triplex. For $d+6$ or more vertices, stronger conclusions hold.

\begin{proposition}\label{prism} Fix $d\ge4$, and let $P$ be a non-simplicial $d$-polytope with $2d$ vertices in which every facet is either a simplicial prism or a simplex. Then $P$ is a simplicial prism.
\end{proposition}

{\bf Remark.} It is easy to see that the corresponding result for $d=3$ is false.

\begin{proof} First we establish the more interesting case $d\ge5$. Suppose that $u_1,\ldots,u_{d-1}$, $v_1,\ldots,v_{d-1}$ are the vertices of a prism facet, with the natural adjacency relations suggested by the notation. Then $u_2,\ldots,u_{d-1},v_2,\ldots,v_{d-1}$ are the vertices of a prism ridge, whose other facet must be also a prism. Since $d-2\ge3$, we must have one of two extra vertices adjacent to all of  $u_2,\ldots,u_{d-1}$ and the other adjacent to all of  $v_2,\ldots,v_{d-1}$. We can call these two extra vertices  $u_{d}$ and $v_{d}$.

Repeating this argument with the other ridges, we see that the graph of $P$ is that of a prism and we can check the entire face lattice is that of a prism.

Now consider the case  $d=4$, and label the vertices (without using  subscripts) as $a,b,\ldots,h$. Suppose that $abcd$ is a quadrangle ridge, the intersection of two prisms, one containing $e$ and $f$, and the other containing $g$ and $h$. Without loss of generality,  $e$ is adjacent to $a,d$ and $f$ is adjacent to $b,c$. If $g$ is adjacent to $a,d$ and $h$ is adjacent to $b,c$, the proof proceeds as before.

But now, we need to consider also the  possibility that  $g$ is adjacent to $a,b$ and $h$ is adjacent to $c,d$. We will show that this case does not arise.

In this case, $cdef$ will be a ridge in the prism $abcdef$; the other facet containing it must contain $g,h$. Since $h$ is adjacent to $c,d$, we must have $g$ adjacent to $e,f$.

Likewise $bcgh$ is a ridge in the prism $abcdgh$; the other facet containing it must contain $e,f$. Since $f$ is adjacent to $b,c$, we must have $f$ not adjacent to $g$, contradicting the previous paragraph. So the presumed configuration is impossible.
\end{proof}

\begin{proposition}\label{prismgap} Fix $d\ge4$, and let $P$ be a $d$-polytope with $2d$ vertices and no more than $d^2+d-4$ edges. Then $P$ is a simplicial prism.
\end{proposition}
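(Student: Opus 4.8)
The plan is to prove the statement by induction on $d$. The base case $d=4$ is immediate: here $d^2+d-4=16$, and Gr\"unbaum's computation $E(8,4)=\{16\}\cup[18,28]$ shows that the only $4$-polytope with $8$ vertices and at most $16$ edges is the $16$-edge prism (Lemma \ref{decomp}). For the inductive step assume $d\ge5$. Write $e=f_1(P)$ and recall $\phi(2d,d)=d^2$. Two easy reductions come first. By Theorem \ref{excess}(ii) a simplicial $d$-polytope with $2d$ vertices has $\xi(P)\ge(d-1)d$, hence $e\ge(3d^2-d)/2>d^2+d-4$, so $P$ is not simplicial. If $P$ were a pyramid its base would be a $(d-1)$-polytope with $2(d-1)+1$ vertices, so by Theorem \ref{pentam} would have at least $(d-1)^2+(d-1)-1$ edges, forcing $e\ge d^2+d-2$; so $P$ is not a pyramid, and every facet has at least two and at most $2d-2$ vertices.

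The first real step is to pin down the facet sizes. For a facet $F$ with exactly $n$ vertices outside ($2\le n\le d$), Theorem \ref{big} gives $f_1(F)\ge\phi(2d-n,d-1)$ and Lemma \ref{outside} bounds the edges meeting those $n$ vertices below by $nd-\binom{n}{2}$; adding these and applying the identity of Lemma \ref{boring} with $k=d$ yields
\[
e\ \ge\ \phi(2d-n,d-1)+nd-\binom{n}{2}\ =\ d^2+(d-n)(n-2).
\]
Since $(d-n)(n-2)\ge d-3>d-4$ for every $n$ with $3\le n\le d-1$, no facet can have between $d+1$ and $2d-3$ vertices. Hence every facet has exactly $d$ vertices (a simplex) or exactly $2d-2$ vertices, and since $P$ is not simplicial at least one facet is of the latter kind.

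Next I would show that every $(2d-2)$-vertex facet $F$ is a prism, so that Lemma \ref{prism} becomes applicable. Writing $x,y$ for the two vertices outside such an $F$, Lemma \ref{outside} gives at least $2d-1$ edges meeting $\{x,y\}$, whence $f_1(F)\le e-(2d-1)\le(d-1)^2+(d-1)-3$. Whenever this is strict --- in particular whenever $e\le d^2+d-5$, or one of $x,y$ is non-simple, or $xy$ is not an edge --- we obtain $f_1(F)\le(d-1)^2+(d-1)-4$ and the induction hypothesis forces $F$ to be a prism. The only remaining possibility is the tight case $e=d^2+d-4$ with $x,y$ both simple, $xy$ an edge, and exactly $2d-2$ edges running from $F$ to $\{x,y\}$. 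Here I would invoke Theorem \ref{shp}: if every vertex of $F$ met exactly one edge outside $F$, then $P$ would be decomposable, hence a prism by Lemma \ref{decomp} --- contradicting $e=d^2+d-4>d^2$. So some vertex of $F$ is adjacent to both $x,y$ and another, $z$, to neither; this $z$ has all its neighbours in $F$ and degree $\ge d$, so is non-simple in $F$. Because $x$ is simple, the facet through $x$ opposite the edge $xy$ contains $x$ together with all $d-1$ of its neighbours in $F$, and symmetrically for $y$; analysing these two facets, together with the $d-1$ facets through $xy$, pins down the adjacencies in $F$ and is intended to contradict the presence of the non-simple vertex $z$.

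The main obstacle is exactly this tight boundary case: the induction loses a single edge, because the minimal incidence count $2d-1$ for $\{x,y\}$ falls one short of the $2d$ that would feed the induction cleanly, and what remains is a non-prism facet of the smallest conceivable excess $2d-8$. For small $d$ it does not arise (for $d=5$ it would require the impossible excess $\xi(P)=2$), but in general it must be excluded by the structural analysis of the two simple outside vertices sketched above, or by classifying the minimal-excess polytopes on $2(d-1)$ vertices. Once every $(2d-2)$-facet is known to be a prism and every other facet is a simplex, Lemma \ref{prism} (together with $P$ non-simplicial) gives that $P$ is a simplicial prism, completing the induction.
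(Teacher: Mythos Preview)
Your approach is essentially the same as the paper's---induction on $d$, ruling out pyramids via Theorem~\ref{pentam}, eliminating facets with $d+1$ to $2d-3$ vertices by the edge-count identity, and handling the case $n=2$ with Shephard's criterion plus the inductive hypothesis. The difference lies only in how the $n=2$ case is split, and this is exactly where your ``main obstacle'' appears. In fact it is a phantom.

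You overlook the elementary fact that \emph{every} vertex of a facet $F$ has at least one edge of $P$ not contained in $F$ (the vertex figure of $v$ in $P$ is $(d-1)$-dimensional, and the vertex figure of $v$ in $F$ is a facet of it). Consequently, in your tight case---exactly $2d-2$ edges running from the $2d-2$ vertices of $F$ to $\{x,y\}$---each vertex of $F$ must have \emph{exactly} one such edge. Your sentence ``So some vertex of $F$ is adjacent to both $x,y$ and another, $z$, to neither'' is therefore impossible: the hypothesis of Theorem~\ref{shp} is automatically satisfied, $P$ is decomposable, and Lemma~\ref{decomp} forces $P$ to be a prism, contradicting $e=d^2+d-4>d^2$. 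No structural analysis of facets through $xy$ is needed, and the appeal to an ``impossible excess $\xi(P)=2$'' (which is not established in this paper) is unnecessary.

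The paper sidesteps the whole issue by splitting the $n=2$ case the other way round: either every vertex of $F$ has exactly one edge out (Shephard $\Rightarrow$ prism, done), or some vertex of $F$ has at least two, whence at least $2d-1$ edges leave $F$ \emph{from the inside}, plus the edge $xy$, giving $f_1(F)\le(d^2+d-4)-2d=(d-1)^2+(d-1)-4$ and the induction applies cleanly. Once you insert the missing fact, your argument collapses to exactly this.
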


\begin{proof}  We use induction on $d$; the base case $d=4$ is clear from \cref{decomp}.

We will proceed by showing that every facet is a prism or a simplex, and then apply  \cref{prism}. So let $F$ be any facet of $P$
and denote by $n$ the number of vertices outside $F$; clearly $n\le d$.

Were $n$ equal to one, $P$ would be a pyramid and its  base $F$ would have $2d-1=2(d-1)+1$ vertices and at most $(d^2+d-4)-(2d-1)$ edges. But this number is
$$=(d-1)^2+(d-1)-3<{\min E(2(d-1)+1,d-1) }.$$ \cref{pentam} eliminates this possibility.

 So $n\ge2$, and $F$ has $2d-n\le2(d-1)$ vertices and hence $P$ has at least
$$\phi(2d-n,d-1)+nd-{n\choose2}=\phi(2d,d)+(d-n)(n-2)$$
edges. This number is at least $d^2+d-3$, for all $n$ between 3 and $d-1$.

If $n=d$, $F$ is obviously a simplex. This leaves us with the case $n=2$.

If every vertex in $F$ has a unique edge leading out of it, then $P$ will be decomposable by  \cref{decomp}, and hence a prism and we are finished. Otherwise, there will be at least $2d-2+1$ edges going out of $F$ and one edge between the two vertices outside $F$. This implies that there are at most
$$(d^2+d-4)-2d=(d-1)^2+(d-1)-4$$
edges in $F$. By induction, $F$ must be a prism.
This completes the proof.
\end{proof}

Rewording, any $d$-polytope with $2d$ vertices which is not a prism must have at least $d^2+d-3$ edges. This is almost best possible, since a pyramid based on a pentasm has $2d$ vertices and $d^2+d-2$ edges.

We conjecture that for $d\ge6$, there are no $d$-polytopes with $2d$ vertices and $d^2+d-3$ edges. For $d=5$ or 3, examples are easy to find.

Next we extend this result to $d+k$ vertices, for $k<d$. That is, we show that any such $d$-polytope which is not a triplex has at least $k-3$ more edges than the triplex, i.e. excess  degree at least  $(k-1)(d-k)+2(k-3)$. First we  establish a special case.

\begin{lemma}\label{tenandahalf} As usual, let $P$ be a $d$-dimensional polytope whose vertex set $V$ has $d+k$ elements, $k\le d$. Suppose that $P$ is not simplicial, that every non-simplex facet has $d+k-2$ vertices, and that every non-simplex ridge has $d+k-4$ vertices. Then either $k=3$ or $k=d$.
\end{lemma}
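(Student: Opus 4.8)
The plan is to show that $k=1$, $k=2$, and every $k$ with $4\le k\le d-1$ are impossible, leaving only $k=3$ or $k=d$. The two small values are immediate. If $k=1$ then $P$ has $d+1$ vertices and is a simplex, hence simplicial, contrary to hypothesis. If $k=2$, then a non-simplex facet would have $d+k-2=d$ vertices; but a $(d-1)$-polytope with only $d$ vertices is a simplex, so $P$ would have no non-simplex facet and would again be simplicial. Thus $k\ge 3$, and the whole content of the lemma is to exclude $4\le k\le d-1$.

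Fix a non-simplex facet $F$, which exists since $P$ is not simplicial; by hypothesis $|F|=d+k-2$, so exactly two vertices $x,y$ of $P$ lie outside $F$. Each facet $R$ of $F$ is a ridge of $P$ lying in $F$, so it is contained in exactly one further facet $G$ of $P$; since $R$ is a \emph{maximal} proper face of $F$, the face $G\cap F$ equals $R$, whence $G\subseteq R\cup\{x,y\}$. Assuming now $k\ge4$, I would record two pairing facts. If $R$ is a simplex facet of $F$ (so $|R|=d-1$), then $|G|\le d+1<d+k-2$, so $G$ cannot be a non-simplex facet, and therefore $G$ is a simplex equal to $R\cup\{x\}$ or $R\cup\{y\}$. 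If instead $R$ is a non-simplex ridge (so $|R|=d+k-4\ge d$), then $G\supsetneq R$ forces $|G|\ge d+k-3>d$, so $G$ is again not a simplex and hence $G=R\cup\{x,y\}$, a non-simplex facet meeting $F$ in $R$ and containing \emph{both} external vertices. Both statements use $k\ge4$ essentially.

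I would then split according to whether $F$ is simplicial. Suppose first that $F$ is simplicial, i.e.\ $P$ has no non-simplex ridge inside $F$. A short argument with the maximality of faces shows that then \emph{every} facet of $P$ other than $F$ is a simplex: any other non-simplex facet $G'$ would meet $F$ in a face with $d+k-4>d-1$ vertices, impossible in a simplicial $F$. Consequently every facet meeting exactly one external vertex has the form $R\cup\{x\}$ or $R\cup\{y\}$ for a facet $R$ of $F$, and these exhaust the facets of $F$, which are thereby $2$-coloured by the label $x$ or $y$; the facets meeting both of $x,y$ are simplices $T\cup\{x,y\}$ over $(d-3)$-faces $T$ of $F$. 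Since deleting the vertices of the face $F$ does not disconnect the graph of $P$ (the fact used in the proof of \cref{lowdim}), $x$ and $y$ must be adjacent. Tracking the other facet across each $(d-3)$-face of $F$ separating an $x$-labelled from a $y$-labelled facet, together with the connectedness of the dual graph of $F$, forces a contradiction in the vertex degrees -- in the extreme monochromatic case one of $x,y$ lies in no facet at all. This degree/colouring bookkeeping is the first main obstacle.

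In the remaining case $F$ has a non-simplex facet $R$, a non-simplex ridge of $P$ with $|R|=d+k-4$. Writing $F=R\cup\{p,q\}$ and, by the second pairing fact, $G:=R\cup\{x,y\}$ for the other facet through $R$, the entire vertex set decomposes as $V=R\sqcup\{p,q,x,y\}$; thus $P$ is built from the $(d-2)$-ridge $R$ together with only four extra vertices, and $F,G$ are two non-simplex facets sharing $R$, with symmetric roles $\{p,q\}\leftrightarrow\{x,y\}$. I would analyse the facets incident to $R$ and to these four special vertices, using \cref{decomp} and \cref{shp} to identify the prisms forced by the size constraints, and show that the resulting incidences cannot close up to a genuine $d$-polytope unless $k=d$. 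Verifying that this four-vertex ``core'' is combinatorially inconsistent for $4\le k\le d-1$ is the crux of the argument and the main obstacle overall. Finally I would note why the two surviving values escape: for $k=3$ the prescribed non-simplex ridge size $d+k-4=d-1$ is exactly that of a simplex, so the ridge hypothesis is vacuous and imposes nothing, while for $k=d$ the simplicial prism realises every requirement, its facets being $(d-1)$-prisms (two external vertices apiece) and two simplices.
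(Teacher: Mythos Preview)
Your setup and the two ``pairing facts'' are correct, but the proof is not complete: both ``main obstacles'' you name remain obstacles. In Case~1 ($F$ simplicial) you only gesture at a colouring/degree argument without carrying it through, and in Case~2 you assert that the four-vertex core is inconsistent for $4\le k\le d-1$ without proving it; the appeals to Lemma~\ref{decomp} and Theorem~\ref{shp} are not substantiated (you have not exhibited a facet all of whose vertices are simple in $P$, and size constraints alone do not trigger either result).

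The paper's argument is entirely different and much shorter; it bypasses your case split. Its single structural observation is that for $k\ge4$ the outside pairs $V\setminus F$ and $V\setminus G$ of any two distinct non-simplex facets are disjoint: the face $F\cap G$ is too large to sit inside a simplex ridge, hence is a non-simplex ridge of size $d+k-4$, whence $F\cup G=V$. It then takes a non-triangular $2$-face $Q$; since every $2$-face of a $d$-polytope lies in at least $d-2$ facets and a simplex has only triangular $2$-faces, $Q$ lies in at least $d-2$ non-simplex facets with pairwise disjoint outside pairs. Counting gives $|V|\ge|Q|+2(d-2)\ge2d$, hence $k=d$.

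One subtlety worth flagging: the paper justifies the existence of a non-triangular $2$-face via Kalai's rigidity theorem (Theorem~\ref{excess}(ii)), which only says that if every $2$-face were a triangle then $\xi(P)\ge(k-1)d>(k-1)(d-k)+2(k-3)$. That is not a contradiction with the hypotheses of the lemma \emph{as stated}; it disposes of that case directly in the proof of Theorem~\ref{biggap}, which is the lemma's only application. Your Case~1 is exactly this residual ``all $2$-faces triangular'' situation (since $F$ simplicial and all other facets simplices forces every $2$-face to be a triangle), and the paper does not in fact settle it inside the lemma either.
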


\begin{proof}  The hypotheses exclude the possibilities that $k=1$ or 2.
So assume $k\ge4$.

The ridge hypothesis implies that if $F$ and $G$ are distinct non-simplex facets, then the ``outside pairs" $V\setminus F$ and $V\setminus G$ will be disjoint.

If every 2-face is a triangle,  Kalai's rigidity theorem,  \cref{excess}(ii), tells us that  the excess of $P$ is at least $(k-1)d$, which is clearly more than $(k-1)(d-k)+2(k-3)$.

Otherwise, there is a non-triangular 2-face $Q$, which must belong to at least $d-2$ distinct facets, none of which can be simplicial. The ``outside pairs" of these facets total $2d-4$ vertices and there are at least 4 vertices in $Q$. So $P$ has $2d$ vertices and $k=d$.
\end{proof}

\begin{theorem}\label{biggap} If $4\le k\le d$, then a
 $d$-polytope with $v=d+k$ vertices which is not a triplex must have at least $\phi(v,d)+k-3$ edges.
In other words, its excess degree is at least $(k-1)(d-k)+2(k-3)$.
\end{theorem}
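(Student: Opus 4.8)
The plan is to argue by contradiction, via strong induction on $d$ (treating all admissible $k$ simultaneously). Suppose $P$ is a non-triplex $d$-polytope with $d+k$ vertices, $4\le k\le d$, and $e(P)\le\phi(d+k,d)+k-4$. The case $k=d$ is already settled by \cref{prismgap}, so we may assume $4\le k\le d-1$; and whenever a reduction produces a value $k\le3$ we simply invoke \cref{big}, since there the asserted bound reads $e>\phi$. Two quick reductions dispose of the easy cases. If $P$ is a pyramid over a facet $F$, then $F$ is a non-triplex $(d-1)$-polytope with $(d-1)+k$ vertices, and combining the inductive bound for $F$ with the pyramid identity $\phi(d+k,d)=\phi(d+k-1,d-1)+(d+k-1)$ (the case $n=1$ of \cref{boring}) gives $e(P)\ge\phi(d+k,d)+k-3$, a contradiction. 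If every $2$-face of $P$ is a triangle, then \cref{excess}(ii) forces excess at least $(k-1)d$, which exceeds $(k-1)(d-k)+2(k-3)$ for every $k$; so $P$ has a non-simplex facet and, in particular, is not a pyramid.

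Next I would establish the facet condition demanded by \cref{tenandahalf}. For any facet $F$ with $n$ vertices outside it, \cref{outside}, the lower bound of \cref{big}, and \cref{boring} combine to give
$$e(P)\ \ge\ \phi(d+k-n,d-1)+nd-{n\choose2}\ =\ \phi(d+k,d)+(k-n)(n-2).$$
The quantity $(k-n)(n-2)$ is concave in $n$, vanishes at $n=2$ and $n=k$, and equals $k-3$ at both $n=3$ and $n=k-1$; hence $(k-n)(n-2)\ge k-3$ throughout $3\le n\le k-1$. Thus no facet can have between $3$ and $k-1$ vertices outside, so every non-simplex facet has exactly two vertices outside, i.e. $d+k-2$ vertices. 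Moreover a non-simplex facet cannot be a triplex: running the ``other facet through the ridge is a pyramid'' argument of \cref{big} for $n=2$ yields $e(P)\ge\phi(d+k,d)+k-2$, again a contradiction. By the inductive hypothesis each non-simplex facet is therefore a \emph{minimal} non-triplex, with exactly $\phi(d+k-2,d-1)+(k-4)$ edges. When $k=4$ this is already impossible (a non-triplex $(d-1)$-polytope with $(d-1)+3$ vertices has strictly more than $\phi(d+k-2,d-1)$ edges by \cref{big}), so we may assume $k\ge5$.

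The crux is the ridge condition of \cref{tenandahalf}: every non-simplex ridge should have $d+k-4$ vertices. Since a non-simplex ridge has more than $d-1$ vertices, neither of the two facets through it can be a simplex, so both have $d+k-2$ vertices; writing $m$ for the number of vertices of such a facet lying outside the ridge, one checks this $m$ is the same for both facets and that the ridge has $d+k-2-m$ vertices. Repeating the facet count one dimension down, inside a containing facet $F$ (now known to have exactly $\phi(d+k-2,d-1)+(k-4)$ edges), gives $k-4\ge((k-1)-m)(m-2)$, which confines $m$ to $\{1,2,3,k-2\}$ but does not by itself force $m=2$. The remaining tight configurations must be excluded by the rigid local adjacency they impose: in each, all relevant vertices have minimal degree, the ridge is forced to be a triplex, and the two exterior vertices are adjacent, so one can trace the vertex--facet incidences exactly as in \cref{3outside} and \cref{pentasm} and reach a contradiction. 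This adjacency bookkeeping is the main obstacle, and is the one step that genuinely needs the detailed combinatorics rather than pure edge-counting.

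With both the facet and ridge conditions in hand, \cref{tenandahalf} applies and forces $k=3$ or $k=d$. The first is excluded by $k\ge4$ and the second by \cref{prismgap}; either way we contradict the existence of the counterexample, completing the induction and hence the proof.
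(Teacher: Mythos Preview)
Your overall architecture matches the paper's (induction on $d$, reduce to the hypotheses of \cref{tenandahalf}, then apply it), but the step you flag as ``the main obstacle'' is a genuine gap, and the paper closes it by a much simpler route than the one you sketch.

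Concretely: after you have shown that every non-simplex facet has exactly $d+k-2$ vertices, you try to force the ridge condition by edge-counting \emph{inside} a facet, obtain only the weak constraint $m\in\{1,2,3,k-2\}$, and then hand-wave the remaining values away via unspecified ``adjacency bookkeeping''. None of that is needed. For a non-simplex ridge $R$ with containing facets $F,G$ one has $F\cap G=R$, hence $G\setminus R=G\setminus F\subseteq V\setminus F$, which has only two elements; so $|G\setminus R|\le2$ and therefore $|R|\ge d+k-4$ automatically. Thus the \emph{only} way a non-simplex ridge can fail to have $d+k-4$ vertices is $|R|=d+k-3$, i.e.\ $F$ (and then $G$) is a pyramid over $R$. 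Your imagined bad cases $m=3$ and $m=k-2$ simply cannot occur.

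The paper therefore does not attempt your inside-the-facet count at all. It singles out the one genuinely exceptional case---some non-simplex facet $F$ is a pyramid over a ridge $R$---and disposes of it by a short excess-degree computation: the second facet $G$ through $R$ is also a pyramid, there is a unique vertex $v$ outside $F\cup G$, and the $d$ edges at $v$ each add $1$ to the excess at their other endpoint, giving excess at least $(k-2)(d-k-1)+2(k-3)+d=(k-1)(d-k)+2k-4$. With that case removed, the ridge condition of \cref{tenandahalf} holds for free by the cardinality argument above, and the lemma finishes the proof.

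So your proposal is incomplete as written, and the detour through ``each non-simplex facet is a minimal non-triplex with exactly $\phi(d+k-2,d-1)+(k-4)$ edges'' is superfluous: the paper never needs this, because the ridge condition follows from set-theoretic cardinalities rather than edge counts.
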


\begin{proof}  \cref{prismgap} establishes this for $k=d$, so we assume $k<d$. {We proceed by induction on $d$ for a fixed $k$.}

As before, choose a facet $F$ and let $n$ be the number of vertices of $P$ not in $F$.

If $n=1$, then $F$ is not a triplex, since $P$ is a pyramid over $F$. The inductive hypothesis ensures that $\xi(F)\ge(k-1)(d-1-k)+2(k-3)$. The apex of the pyramid has excess degree $k-1$, so adding these up gives the desired estimate. (This is the only instance in which the inductive hypothesis is needed).

For $3\le n<k$ proceed as in  \cref{big}, with the required estimate actually following from the arguments presented there.

We are left with the cases $n=2$ or $n=k$. But if these are the only possible values of $n$, then $P$ has the property that every non-simplex facet has precisely two vertices outside it. In particular, all non-simplicial facets have the same number of vertices.
Moreover, a non-simplex ridge must omit at most two vertices of any facet which contains it; otherwise the other facet containing it would omit at least three vertices without being a simplex.
The case when $P$ is simplicial  follows from the Lower Bound Theorem, so we assume that at least one facet is not a simplex.

First consider the possibility that some such facet  $F$ is a pyramid over some ridge $R$. Then the other facet $G$ containing $R$ must also be a pyramid thereover, and there will be one vertex not in $F\cup G$. Then $R$ will contain $d+k-3=d-2+k-1$ vertices, so must have excess degree at least $(k-2)(d-k-1)$ by \cref{big}. {The apices of the pyramids will each contribute excess degree $k-3$ in $P$, before we consider the contribution of the remaining vertex; call it $v$. Since $v$ belongs to at least $d$ edges, the vertices at the other end of these edges will each  contribute 1 to the excess degree of $P$}. So $P$ will have excess degree at least $(k-2)(d-k-1)+2(k-3)+d=(k-1)(d-k)+2k-4$, i.e. $P$ will have at least $k-2$ more edges than the corresponding triplex.

The remaining situation is that every non-simplex ridge has exactly two vertices less than the facets containing it. Together \cref{prismgap} and \cref{tenandahalf}  deal with this situation.
\end{proof}

The case $k=4$ in preceding theorem does not tell us anything more than  \cref{big}. We now give the promised result that $E(d+4,d)$ also contains a gap.

\begin{theorem} If $d\ge4$, then a
 $d$-polytope $P$ with $d+4$ vertices cannot have $\phi(d+4,d)+1$ edges.
In other words, either $P$ is a triplex, or its excess degree is at least $3d-8$.
\end{theorem}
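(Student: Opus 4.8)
The plan is to argue by induction on $d$, the base case $d=4$ being Gr\"unbaum's determination $E(8,4)=\{16\}\cup[18,28]$ \cite{G} (here $\phi(8,4)=16$, so the forbidden value $\phi+1=17$ is precisely the gap). Since \cref{big} already gives $e\ge\phi(d+4,d)+1$ for any non-triplex $P$, and since for fixed $d$ and $v=d+4$ the excess degree $\xi=2e-d(d+4)$ changes in steps of $2$, it suffices to show that a non-triplex $P$ cannot have exactly $\phi(d+4,d)+1$ edges; equivalently that $\xi(P)\ge 3d-8$.

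Two cases reduce immediately. If $P$ is a pyramid with base $F$, then $F$ is a non-triplex $(d-1)$-polytope with $(d-1)+4$ vertices (a pyramid over a triplex is again a triplex), the apex has degree $d+3$ and hence excess $3$, and each base vertex keeps its excess, so $\xi(P)=\xi(F)+3$; the inductive hypothesis gives $\xi(F)\ge 3(d-1)-8$ and thus $\xi(P)\ge 3d-8$. If $P$ is simplicial, then Kalai's theorem, \cref{excess}(ii), gives $\xi(P)\ge 3d>3d-8$. So I may assume $P$ is neither a pyramid nor simplicial; then $P$ has a non-simplex facet $F$, and since $P$ is not a pyramid every facet has at least two vertices outside it, while a non-simplex facet has at least $d+1$ vertices. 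Hence $F$ has $n\in\{2,3\}$ vertices outside it, and by \cref{boring} together with \cref{outside,big} the number of edges is at least $\phi(d+4,d)+(4-n)(n-2)$, namely $\phi$ when $n=2$ and $\phi+1$ when $n=3$.

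Now I refine each case. For $n=3$ the facet $F$ is a $(d-1)$-polytope with $(d-1)+2$ vertices: if $F$ is not a triplex it carries one extra edge and the total is at least $\phi+2$, while if the three outside vertices carry more than the minimal $3d-3$ edges the total again exceeds $\phi+1$; the only survivor is $F=M_{2,d-3}$ with the three outside vertices mutually adjacent and all simple. For $n=2$, if $F$ is a pyramid over one of its ridges $R$ (in particular whenever $F$ is the triplex $M_{3,d-4}$, which for $d\ge5$ is itself a pyramid), then because $P$ is not a pyramid the second facet through $R$ cannot contain both outside vertices, so it is a pyramid over $R$ with a single apex among them; counting the edges of $F$, the $|V(R)|=d+1$ edges from that apex to $R$, and the $\ge d$ edges at the remaining outside vertex gives at least $\phi(d+2,d-1)+(d+1)+d=\phi+2$. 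The only survivor is thus a non-pyramidal (hence, for $d\ge5$, non-triplex) facet $F$ with exactly $\phi(d+2,d-1)+1$ edges, the two outside vertices being adjacent and simple.

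It remains to kill these two tight configurations, and this is where the real work lies. In each of them the facet $F$ and the adjacencies of the outside vertices to $F$ are completely pinned down by the equality, so I would reconstruct $P$ by determining, for every ridge of $P$ contained in $F$, the unique other facet through it, exactly as in the proofs of \cref{lowdim,3outside,pentasm}: each such facet is forced to be a simplex, a prism, or a triplex with prescribed apices (using \cref{decomp} to recognise the prism facets), and one checks that these assignments cannot be made consistently for a non-triplex $P$. For the $n=3$ survivor this is aided by the observation that $M_{2,d-3}$ never occurs as a facet of the triplex $M_{4,d-4}$, so any consistent completion would have to be a non-triplex of minimal-plus-one edge count, which the pinned adjacencies forbid. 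I expect the $n=2$ survivor to be the main obstacle: there $F$ is merely a non-pyramidal near-minimal $(d-1)$-polytope with $d+2$ vertices and is not rigidly determined, so the reconstruction must be carried out uniformly over the possible combinatorial types of $F$, and controlling these simultaneously --- rather than the essentially unique facet appearing when $n=3$ --- is the delicate part of the argument.
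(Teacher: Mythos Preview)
Your reduction is sound as far as it goes, but the proof is genuinely incomplete: you explicitly leave the two ``survivor'' configurations unfinished, and the work you defer is not routine. In particular, your plan for the $n=2$ survivor --- a non-pyramidal $(d-1)$-polytope $F$ with $d+2$ vertices and exactly $\phi(d+2,d-1)+1$ edges --- cannot proceed by the reconstruction template of \cref{lowdim,3outside,pentasm}, because those arguments all exploit a \emph{unique} combinatorial type for the facet in question, whereas here $F$ is merely ``any non-triplex $(d-1)$-polytope with $(d-1)+3$ vertices of minimal excess''. You acknowledge this yourself; there is no uniform description of such $F$, so ridge-by-ridge reconstruction inside $F$ does not get off the ground.

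The paper avoids this obstacle by shifting the case analysis from \emph{facets} to \emph{ridges}. After the pyramid case, it first disposes of any ridge with $d+2$ or $d+1$ vertices (your $n=2$ pyramidal-$F$ computation is exactly the $d+1$ case). Once all ridges have at most $d$ vertices, any facet with $d+2$ vertices is automatically a non-triplex --- because $M_{3,d-4}$ contains $M_{3,d-5}$ as a ridge with $d+1$ vertices --- and therefore has excess at least $2d-6$ by the strict inequality in \cref{big}. This single observation replaces the need to classify such $F$. Now for any ridge $R$ with $d$ vertices one bounds $\xi(P)$ by combining $\xi(R)$ with the known excess of the two facets through $R$; a short subargument reduces the non-triplex-$R$ subcase to the triplex-$R$ subcase. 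Finally, if every ridge is a simplex then every $2$-face is a triangle, and Kalai's rigidity theorem (\cref{excess}(ii)) --- in its full ``triangular $2$-faces'' form, not merely for simplicial $P$ --- gives $\xi(P)\ge 3d$.

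So the missing idea is: do not try to pin down the tight facet; instead pass to ridges, use \cref{big} to lower-bound the excess of each facet through a given ridge, and let Kalai handle the residual all-simplex-ridge case.
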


\begin{proof} Again, by induction on $d$. Gr\"unbaum {\cite[Thm.~10.4.2]{G}} established the base case, $d=4$.  We note a shorter proof of this, using the structure results for polytopes with low excess. Combining \cite[Thms. 4.1 and 4.10]{PUY} shows that a polytope with excess $d-2$ must be either decomposable or a pyramid. A 4-polytope with 8 vertices and 17 edges would have excess two. However it could not be decomposable because of \cref{decomp}, and it could not be a pyramid because its base would  need to have seven vertices and only ten edges. We now proceed to the inductive step.

If some facet has $d+3$ vertices, then $P$ is a pyramid, and the conclusion follows easily by induction.

If some ridge has $d+2$ vertices, then some facet has $d+3$ vertices, and we are finished.

Now suppose there is a ridge $R$ with $d+1=d-2+3$ vertices. We only need to consider the case that both facets containing it have $d+2$ vertices. This ridge has excess degree at least $(3-1)(d-2-3)=2d-10$. Both facets are pyramids over $R$, and their apices contribute excess degree 2 in $P$. Next we consider the remaining vertex, outside both facets. It has degree at least $d$, so  is adjacent to at least $d-2$ vertices in $R$. The vertices at the other end of these edges will each  contribute 1 to the excess degree of $P$,  so we have another contribution to the excess of at least $d-2$. The total excess degree of $P$ is then at least $2d-10+2+2+d-2=3d-8$.

Henceforth we may assume that every ridge has either $d$ or $d-1$ vertices, and that every facet has at most $d+2$ vertices. Note that if a triplex $M_{3,d-4}$ is a facet of $P$, then $P$ will contain $M_{3,d-5}$ as a ridge with $d+1$ vertices. So we may also assume that any facet with $d+2=d-1+3$ vertices is not a triplex, and thus has excess degree at least $(3-1)(d-1-3)+2=2d-6$ (by virtue of \cref{excess}(iii)).

Let $R$ be a ridge with $d=d-2+2$ vertices. We assume first that it is not a triplex, and will show that  either $\xi(P)\ge3d-8$, or that there is another ridge with  $d$ vertices which is  a triplex. Not being a triplex, the excess degree of $R$ will be at least $d-2$.

If one facet containing $R$ has $d+1$ vertices, it will be a pyramid over $R$. The edges incident with the three vertices outside the facet will contribute excess degree at least $3(d-2)-d$. The total excess will then be at least $d-2+2d-6=3d-8$.

Otherwise, both facets containing $R$ have $d+2$ vertices. Having degree at least $d-1$ within the facet, each external vertex is adjacent to at least $d-2$ vertices in $R$. Hence the edges incident with the two extra vertices in each facet contribute excess degree at least $2(d-2)-d=d-4$, and the total excess degree of $P$ is  at least $d-2+d-4+d-4=3d-10$. The excess degree can be strictly less than $3d-8$ only if $R$ has excess degree exactly $d-2$ and each vertex outside $R$ is simple.

Let $F$ be one of the facets, and denote by $a,b$ the two vertices in $F\setminus R$. Then $R$ must contain two vertices (say $v_1$ and $v_2$) which are adjacent to $a$ but not to $b$, two vertices (say $w_1$ and $w_2$) which are adjacent to $b$ but not to $a$, and $d-4$ vertices which are adjacent to both $a$ and $b$. Now the graph of $R$ is almost complete, i.e. has only one edge missing. Without loss of generality, we can assume $v_1$ is adjacent to $w_1$. Let $S$ be a facet of $R$ (i.e. a ridge of $F$) containing $v_1$ and $w_1$. Denote by $R'$ the other facet of $F$ containing $S$. Clearly $v_1$ and $w_1$ must be adjacent to some vertices in $F\setminus R$; thus $R'$ contains both $a$ and $b$. Of course $R'$ is a ridge in $P$. By previous considerations, $R'$ cannot have $d+1$ vertices. Neither $aw_1$ nor $bv_1$ are edges, so $R'$ is not a simplex. Thus it has $d=d-2+2$ vertices. With two edges missing, it must be a triplex $M_{2,d-4}$.

So we consider the case that some ridge $R$ is a triplex, with $d$ vertices and excess degree $d-4$. The two facets containing it, say $F$ and $G$, may have either $d+1$ or $d+2$ vertices.

If both have $d+1$ vertices, they will be pyramids and every vertex in $F\cup G$ will have degree $d$ in the graph of $F\cup G$. Each edge between one of the two vertices outside  $F\cup G$  and a vertex in  $F\cup G$ will increase the excess degree by one, and there are at least $2(d-1)$ such edges. Consequently the total excess degree is  at least $d-4+2d-2=3d-6$.

If both such facets have $d+2$ vertices, they will have excess degree at least $2d-6$ (as they are not triplices). Summing over all the vertices, the total excess degree of $P$ will be at least $\xi(F)+\xi(G)-\xi(R)\ge2(2d-6)-(d-4)=3d-8$.

Otherwise, we can suppose that $F$ has $d+2$ vertices, and hence excess degree at least $2d-6$, while facet  $G$ has $d+1$ vertices and hence is a pyramid over $R$. Every vertex in $G$ has degree at least $d$ in $G\cup F$, and  the one vertex outside  must be adjacent to at least $d-2$ of them. Hence this vertex contributes excess degree at least $d-2$. The excess degree of $P$ is then at least $2d-6+d-2=3d-8$. This completes the proof in the case when some ridge has $d$ vertices.

Finally, we have the situation when every ridge has $d-1$ vertices, i.e. is a simplex. Rather than going through another  case by case analysis of the cardinality of the facets,  we complete the proof by appealing to  Kalai's Rigidity Theorem  (\cref{excess}(ii))  again. Every 2-face of $P$ is a triangle in this case, so the excess degree is at least that guaranteed for simplicial polytopes by the lower bound theorem i.e. $3d$.
\end{proof}

Now we can present a second result about gaps in the possible number of edges.

\begin{proposition} (i)  Fix $n\ge4$. For any $d\ge n^2$, there is  no $d$-polytope with between $\phi(d+n+1,d)+1$ and $\phi(d+n+1,d)+n-3$ edges.

(ii) If  $d=n^2-j$, where  $1\le j\le n-4$, then there is  no $d$-polytope with between $\phi(d+n+1,d)+j+1$ and $\phi(d+n+1,d)+n-3$ edges.

\end{proposition}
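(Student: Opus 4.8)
The plan is to fix the polytope $P$, write $v$ for its number of vertices and $e$ for its number of edges, and argue by cases on $v$ that $e$ can never land in the stated interval. Throughout I will use the identity recorded in the proof of the earlier gap proposition,
$$\phi(d+n+1,d)={d+n\choose2}+d-n^2,$$
together with the fact (noted just after that proposition) that $\phi(\cdot,d)$ is strictly increasing on $\{v:v\le2d\}$. The heart of the matter is the single vertex count $v=d+n+1$, where Theorem \ref{biggap} supplies precisely the forbidden interval; the remaining vertex counts will be pushed either strictly below or strictly above the interval.

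For $v\le d+n$ the trivial bound $e\le{v\choose2}\le{d+n\choose2}$ applies. In part (i), where $d\ge n^2$, the displayed identity gives ${d+n\choose2}=\phi(d+n+1,d)+(n^2-d)\le\phi(d+n+1,d)$, so $e$ lies at or below the bottom of the gap. In part (ii), with $d=n^2-j$, the same identity gives ${d+n\choose2}=\phi(d+n+1,d)+j$, which is exactly one short of the interval $[\phi(d+n+1,d)+j+1,\ \phi(d+n+1,d)+n-3]$. For $v=d+n+1$ I invoke Theorem \ref{biggap} with $k=n+1$ (legitimate since $4\le n+1\le d$): either $P$ is a triplex, in which case $e=\phi(d+n+1,d)$ by Theorem \ref{big}, or $e\ge\phi(d+n+1,d)+(n+1)-3=\phi(d+n+1,d)+n-2$. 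In both subcases $e$ avoids the interval, whose top is $\phi(d+n+1,d)+n-3$.

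It remains to see that larger vertex counts force $e$ strictly above the interval. For $d+n+2\le v\le2d$, Theorem \ref{big} and monotonicity give $e\ge\phi(v,d)\ge\phi(d+n+2,d)$; a short computation from $\phi(d+k,d)=\h d(d+k)+\h(k-1)(d-k)$ yields $\phi(d+n+2,d)-\phi(d+n+1,d)=d-n-1$, which exceeds $n-3$ because $d>2n-2$. For $v\ge2d+1$ every vertex has degree at least $d$, so $e\ge\h dv>d^2$; and the computation $d^2-\phi(d+n+1,d)=\h(d-n)(d-n-1)$ shows that $d^2$ already overshoots the top of the interval by a wide margin. In part (i) the inequalities $d>2n-2$ and $\h(d-n)(d-n-1)>n-3$ follow at once from $d\ge n^2$; in part (ii) they follow from $d=n^2-j\ge n^2-n+4$, using $1\le j\le n-4$ (which in particular forces $n\ge5$).

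The only genuine input is Theorem \ref{biggap}, which does all the real work at $v=d+n+1$; everything else is the bookkeeping of placing each vertex count on the correct side of the interval and checking the elementary inequalities above. Accordingly I expect no serious obstacle, only the need to keep the two regimes $d\ge n^2$ and $d<n^2$ straight, since they differ solely in where the bound for $v\le d+n$ sits relative to the gap.
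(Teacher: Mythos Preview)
Your proposal is correct and follows essentially the same route as the paper: split on the vertex count $v$, use $e\le\binom{d+n}{2}$ for $v\le d+n$, invoke Theorem~\ref{biggap} at $v=d+n+1$, and push $v\ge d+n+2$ above the gap via Theorem~\ref{big} and the trivial bound $2e\ge dv$. The only cosmetic difference is that you compute $\phi(d+n+2,d)-\phi(d+n+1,d)$ and $d^2-\phi(d+n+1,d)$ explicitly, whereas the paper simply chains the monotonicity $d^2=\phi(2d,d)>\phi(d+n+2,d)$; the content is identical.
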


\begin{proof}  It is easy to check that $n<\h d$ in both cases. We  will use again  the  identity

$$\phi(d+n+1,d)={d+n\choose2}+d-n^2.$$

The two parts together are equivalent to the statement

if $d\ge n^2-j$
and  $0\le j\le n-4$, then there is  no $d$-polytope with between $\phi(d+n+1,d)+j+1$ and $\phi(d+n+1,d)+n-3$ edges.

So let $P$ be a $d$-polytope with $v$ vertices  and $e$ edges.

If $v\le d+n$, then $e\le{d+n\choose2}\le\phi(d+n+1,d)+j$.

If $v=d+n+1$ and $P$ is a triplex, then $e=\phi(d+n+1,d)$.

If $v=d+n+1$ and $P$ is not a triplex, then  \cref{biggap} ensures that $e\ge\phi(d+n+1,d)+n-2$.

If $2d\ge v\ge d+n+2$, then $$e\ge\phi(d+n+2,d)=\phi(d+n+1,d)+d-n-1>\phi(d+n+1,d)+n-2.$$

If $v>2d$, then $e\ge\h dv> d^2=\phi(2d,d)>\phi(d+n+2,d)$.\end{proof}

\section{Higher dimensional faces}

 Recall that the number of $m$-dimensional faces of a polytope $P$ is denoted by $f_m(P)$, or simply $f_m$ if $P$ is clear from the context. We will continue the study of lower bounds for high dimensional faces in this section. In particular, \S3 showed that  if $P$ is a triplex with $d+k$ vertices, then
$f_m(P)=\phi_m(d+k,d)$.

Let us define $F_m(v,d)=\{n:$ there is a $d$-polytope with $v$ vertices and $n$ faces of dimension $m\}$. Of course $F_1=E$.
As we said at the beginning, Gr\"unbaum \cite[p 184]{G}  conjectured that $\min F_m(v,d)=\phi_m(v,d)$ for $d<v\le2d$.
He proved that this is true for every $m$ and $v\le d+4$.

McMullen \cite{M} established this for the case $m=d-1$ and all $v\le2d$ (and also solved the problem of minimising facets for some $v>2d$). As far as we are aware, this is the only paper which considers any aspect of the lower bound problem for general polytopes. When $m=d-1$ and $v\le2d$, it is easy to check that $\phi_m(v,d)=d+2$. We will first show that Gr\"unbaum's conjecture is correct for $d$-polytopes with $d+2$ facets and no more than $2d$ vertices, for any value of $m$.

Using this, we will then confirm Gr\"unbaum's conjecture for $m\ge0.62d$ and $v\le2d$, also proving the triplex is the unique minimiser if in addition $m\ne d-1$. We also  present some results  concerning high dimensional faces when $v=2d+1$.

To continue, it will be necessary to understand the structure of $d$-polytopes with $d+2$ facets. The structure of $d$-polytopes with $d+2$ vertices is quite well known, \cite[\S6.1]{G} or  \cite[\S3.3]{MS}, and dualising leads to the following result, which classifies the $d$-polytopes with $d+2$ facets. It appears explicitly in \cite{M}. The calculation of the $f$-vector is the dual statement to \cite[\S6.1.4]{G}.

\begin{lemma}\label{dplus2facets}
 Any $d$-dimensional polytope with $d+2$ facets is, for some $r,s$ and $t$ with $d=r+s+t$, a $t$-fold pyramid over {$\Delta_{r,s}$}. It has $(r+1)(s+1)+t$ vertices, and the number of its $m$-dimensional faces is
$$ {d+2\choose m+2} -{s+t+1\choose m+2}-{r+t+1\choose m+2} +{t+1\choose m+2}.$$
\end{lemma}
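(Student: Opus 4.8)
The plan is to deduce everything by polar duality from the corresponding, and classical, facts about $d$-polytopes with $d+2$ \emph{vertices}. Recall that such polytopes are completely classified (see \cite[\S6.1]{G} or \cite[\S3.3]{MS}): every $d$-polytope with $d+2$ vertices is, for some $r,s\ge1$ and $t\ge0$ with $r+s+t=d$, a $t$-fold pyramid over the free sum $\Delta_r\oplus\Delta_s$ of two simplices. Polar duality is a bijection between $d$-polytopes with $d+2$ vertices and $d$-polytopes with $d+2$ facets, and it interacts well with the two operations involved: the dual of a pyramid over $X$ is a pyramid over the dual of $X$, while the dual of a free sum is a product, since $(P\oplus Q)^\circ=P^\circ\times Q^\circ$ when $P$ and $Q$ lie in complementary subspaces. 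As a simplex is self-dual, the dual of $\Delta_r\oplus\Delta_s$ is $\Delta_r\times\Delta_s=\Delta_{r,s}$. Applying duality term by term therefore shows that every $d$-polytope with $d+2$ facets is a $t$-fold pyramid over some $\Delta_{r,s}$, which is the structural assertion. The vertex count is then immediate: $\Delta_{r,s}$ has $(r+1)(s+1)$ vertices and each of the $t$ pyramid operations adds a single apex, giving $(r+1)(s+1)+t$.

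For the $f$-vector I would give a direct computation, which is cleaner than transcribing and dualising the formula of \cite[\S6.1.4]{G}. View the $t$-fold pyramid $P$ as the join $\Delta_{r,s}\ast\Delta_{t-1}$ with a $(t-1)$-simplex, and use the face formula for joins, $f_m(A\ast B)=\sum_{i+j=m-1}f_i(A)f_j(B)$, where all $f$-vectors are taken with the conventions $f_{-1}=1$ and $f_{\dim}=1$. Since $f_j(\Delta_{t-1})=\binom{t}{j+1}$, it remains to determine the full $f$-vector of the product $\Delta_{r,s}=\Delta_r\times\Delta_s$. Its nonempty faces are exactly the products of a nonempty face of $\Delta_r$ with a nonempty face of $\Delta_s$; counting these by the number of vertices used and applying Vandermonde's identity gives $f_i(\Delta_{r,s})=\binom{r+s+2}{i+2}-\binom{r+1}{i+2}-\binom{s+1}{i+2}$ for $i\ge0$, together with the empty face $f_{-1}=1$.

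Substituting these into the join formula leaves a sum to which Vandermonde's identity applies three times. The crux — and the only place where genuine care is needed — is the bookkeeping of the empty face of $\Delta_{r,s}$, which must be kept separate from the closed-form expression valid for $i\ge0$. After separating it, the terms linear in $\binom{t}{m+1}$ produced by the three Vandermonde convolutions carry the coefficient $-(r+s+2)+(r+1)+(s+1)=0$ and so cancel, a residual $\binom{t}{m+2}$ survives, and the empty-face contribution is exactly $\binom{t}{m+1}$. Pascal's rule $\binom{t}{m+1}+\binom{t}{m+2}=\binom{t+1}{m+2}$ then assembles these into the stated term, yielding
$$f_m(P)=\binom{d+2}{m+2}-\binom{r+t+1}{m+2}-\binom{s+t+1}{m+2}+\binom{t+1}{m+2}.$$
I expect this last accounting to be the main obstacle; the classification and the vertex count are essentially immediate from known results, and the Vandermonde manipulations are routine once the empty face is handled correctly.
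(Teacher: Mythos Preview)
Your proposal is correct and follows the same route as the paper for the structural claim: the paper simply says that dualising the well-known classification of $d$-polytopes with $d+2$ vertices (\cite[\S6.1]{G}, \cite[\S3.3]{MS}) gives the result, and you spell this out by noting that duality turns pyramids into pyramids and free sums into products, so a $t$-fold pyramid over $\Delta_r\oplus\Delta_s$ dualises to a $t$-fold pyramid over $\Delta_{r,s}$.

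For the $f$-vector the approaches diverge slightly. The paper does not compute anything: it merely asserts that the formula is the dual of \cite[\S6.1.4]{G}. You instead give a self-contained derivation via the join formula $P=\Delta_{r,s}\ast\Delta_{t-1}$ and three Vandermonde convolutions, tracking the empty-face correction and using Pascal to combine $\binom{t}{m+1}+\binom{t}{m+2}=\binom{t+1}{m+2}$. This is genuinely more informative than a citation, and your bookkeeping is right (the $\binom{t}{m+1}$ coefficients do cancel as $-(r+s+2)+(r+1)+(s+1)=0$, and the closed form for $f_i(\Delta_{r,s})$ also gives the correct value $1$ at $i=r+s$, so the top face needs no special treatment). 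One small point worth a sentence: the case $t=0$ sits outside the join picture as written, but your final formula still holds there since $\binom{1}{m+2}=0$ for $m\ge0$ and $d=r+s$.
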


Recall that $F_{d-1}(d+1,d)=\{d+1\}$, that $\min F_{d-1}(v,d)>d+1$ if $v>d+1$, and that every triplex other than the simplex has $d+2$ facets. Thus, amongst all $d$-polytopes with $d+k$ vertices, the triplex minimises the number of facets. In general it is not the unique minimiser. But sometimes it is; it depends on the value of $k$.  The next result  reformulates the special case of a result of McMullen \cite[Theorem 2]{M}, in which only $d$-polytopes with no more than $2d$ vertices are considered.

\begin{proposition} Fix $k$ with $2\le k\le d$. Then

(i) $\min F_{d-1}(d+k,d)=\phi_{d-1}(d+k,d)=d+2$;

(ii) the minimum is attained by $M_{k,d-k}$;

(iii) the minimiser is unique, i.e. there is only one $d$-polytope with $d+k$ vertices and $d+2$ facets, if and only if $k=2$ or $k-1$ is  a prime number.
\end{proposition}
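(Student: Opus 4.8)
The plan is to deduce everything from the classification of polytopes with $d+2$ facets in \cref{dplus2facets}, after settling (i) and (ii) directly.

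For (i) and (ii): the identity $f_m(M_{k,d-k})=\phi_m(d+k,d)$ recorded earlier gives, at $m=d-1$, that $M_{k,d-k}$ has $\phi_{d-1}(d+k,d)$ facets; and $\phi_{d-1}(d+k,d)=d+2$ is the routine binomial evaluation $\binom{d+1}{d}+\binom{d}{d}-\binom{d+1-k}{d}=(d+1)+1-0$, the last term vanishing because $k\ge2$ forces $d+1-k<d$. Hence $\min F_{d-1}(d+k,d)\le d+2$. For the reverse inequality, a $d$-polytope with $d+k$ vertices and $k\ge2$ is not a simplex, and the simplex is the unique $d$-polytope with only $d+1$ facets; so it has at least $d+2$ facets. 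This proves (i) and exhibits $M_{k,d-k}$ as a minimiser, which is (ii).

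For (iii) I would first reduce to an elementary counting problem. By \cref{dplus2facets}, every $d$-polytope with $d+2$ facets is a $t$-fold pyramid over some $\Delta_{r,s}$ with $r,s\ge1$, $t\ge0$ and $d=r+s+t$, and it has $(r+1)(s+1)+t$ vertices. Equating this with $d+k$ and substituting $t=d-r-s$ collapses to the single condition $rs=k-1$. Conversely any factorisation $rs=k-1$ with $r,s\ge1$ satisfies $r+s\le 1+(k-1)=k\le d$, so $t:=d-r-s\ge0$ and the associated polytope genuinely occurs. Thus the combinatorial types sought correspond to the factorisations $k-1=rs$ with $r,s\ge1$, counted up to interchanging $r$ and $s$ (as $\Delta_{r,s}\cong\Delta_{s,r}$); the triplex $M_{k,d-k}$ is the case $\{r,s\}=\{k-1,1\}$, since the $k$-prism is $\Delta_{k-1,1}$.

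It then remains to show that distinct unordered pairs $\{r,s\}$ yield combinatorially distinct polytopes, and to count. For distinctness I would use a facet-size invariant: a routine computation of how the pyramid operations act on facets shows that a $t$-fold pyramid over $\Delta_{r,s}$ has facets of exactly three sizes,
\[
d+k-1 \quad(t\text{ of them}),\qquad d+k-1-s\quad(r+1\text{ of them}),\qquad d+k-1-r\quad(s+1\text{ of them}).
\]
Since any facet has at most $d+k-1$ vertices and only the $t$ pyramidal facets attain this, the number $t$ — and hence $r+s=d-t$ — is a combinatorial invariant; as two factorisations of $k-1$ with equal sum coincide, the unordered pair $\{r,s\}$ is recovered, giving distinctness. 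Finally the number of unordered factorisations $k-1=rs$ with $r,s\ge1$ is $1$ precisely when $k-1$ has at most two positive divisors, i.e.\ when $k-1=1$ or $k-1$ is prime, which is exactly the condition $k=2$ or $k-1$ prime in (iii). I expect the distinctness claim to be the only real obstacle; the facet-size list above is what makes it transparent, the crucial observation being that counting the largest facets recovers the pyramid height $t$ and thereby the factorisation.
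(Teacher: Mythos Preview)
Your argument is correct and follows essentially the same route as the paper: both parts (i)--(ii) are settled directly from $\phi_{d-1}(d+k,d)=d+2$ and the existence of $M_{k,d-k}$, and part (iii) is reduced via \cref{dplus2facets} to counting factorisations $k-1=rs$. The only substantive difference is that the paper treats the combinatorial distinctness of the polytopes for different unordered pairs $\{r,s\}$ as part of the known classification (implicit in the reference for \cref{dplus2facets}), whereas you supply an explicit invariant---the number of facets of maximal size $d+k-1$ recovers $t$, and hence $\{r,s\}$---which is a nice self-contained touch but not a genuinely different approach.
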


\begin{proof}
It is routine to check that $\phi_{d-1}(d+k,d)=d+2$, so  (i) and (ii) are clear.

For (iii),  \cref{dplus2facets} tells us that we need only consider a $t$-fold pyramid over {$\Delta_{r,s}$}.
Our hypothesis tells us that $d+k=(r+1)(s+1)+t$  and  $d=r+s+t$; this forces $k = rs +1$.

So if $k-1$ is prime or 1, then $\{r,s\}=\{1,k-1\}$, $t=d-k$ and the polytope is $M_{k,d-k}$.

On the other hand,  if $k-1=rs$ where $r>1,s>1$, then $r+s\le rs+1=k\le d$, and so $t=d-r-s$ is non-negative and we have a second solution for $(r,s,t)$.
\end{proof}

We now establish that Gr\"unbaum's conjecture is correct for all $d$-polytopes with $d+2$ facets.

\begin{theorem}\label{dplus2f}
 Let $P$ be a $d$-dimensional polytope with $d+2$ facets and  $v\le2d$ vertices. If $P$ is not a triplex, and $1\le m\le d-2$, then $f_m(P)>\phi_m(v,d)$.
\end{theorem}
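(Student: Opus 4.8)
The plan is to run everything through the classification and $f$-vector formula of \cref{dplus2facets}. Write $P$ as a $t$-fold pyramid over $\Delta_{r,s}$ with $d=r+s+t$; then $P$ has $v=(r+1)(s+1)+t=d+(rs+1)$ vertices, so the hypothesis $v\le2d$ forces $rs\le d-1$, and $P$ fails to be a triplex precisely when $r,s\ge2$ (the triplices being exactly the cases $\min(r,s)=1$, since $M_{k,d-k}$ is a $(d-k)$-fold pyramid over $\Delta_{k-1,1}$). Setting $k=rs+1$, the natural comparison polytope is the triplex $M_{k,d-k}$: it has the same $d$ and the same $v$, and $f_m(M_{k,d-k})=\phi_m(v,d)$ by the identity recorded in Section 1. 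Crucially, this triplex \emph{also} has $d+2$ facets, being a $(d-1-rs)$-fold pyramid over $\Delta_{rs,1}$, so \cref{dplus2facets} applies to it as well. Thus the entire statement reduces to comparing the single formula of \cref{dplus2facets} evaluated at the two parameter triples $(r,s,t)$ and $(rs,1,d-1-rs)$.

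First I would simplify, using $s+t+1=d-r+1$, $r+t+1=d-s+1$ and $t+1=d-r-s+1$, to obtain
$$f_m(P)=\binom{d+2}{m+2}-\binom{d-r+1}{m+2}-\binom{d-s+1}{m+2}+\binom{d-r-s+1}{m+2},$$
and similarly $\phi_m(v,d)=\binom{d+2}{m+2}-\binom{d-rs+1}{m+2}-\binom{d}{m+2}+\binom{d-rs}{m+2}$. Subtracting, the $\binom{d+2}{m+2}$ terms cancel, and repeated use of Pascal's identity $\binom{n+1}{j}-\binom{n}{j}=\binom{n}{j-1}$ telescopes the difference into
$$f_m(P)-\phi_m(v,d)=\sum_{j=1}^{r-1}\binom{d-j}{m+1}+\binom{d-rs}{m+1}-\sum_{j=s}^{s+r-1}\binom{d-j}{m+1}.$$
Applying Pascal once more to each term collapses this to a difference of two sums of coefficients $\binom{d-1-x}{m}$, one with a plus sign and one with a minus sign. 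The whole point of the bookkeeping is that both sums have exactly $(r-1)(s-1)$ terms, that every index $x$ occurring with a plus sign satisfies $x\le s+r-3$, and that every index occurring with a minus sign satisfies $x\ge s+r-1$; the two index multisets are therefore disjoint and completely separated. Since $\binom{d-1-x}{m}$ is non-increasing in $x$, matching the two equal-size families after sorting shows the positive sum dominates the negative one, giving $f_m(P)\ge\phi_m(v,d)$.

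Strictness is where the hypothesis $1\le m\le d-2$ enters. The smallest plus-index is $x=1$, whose term is $\binom{d-2}{m}\ge1$ (here $m\le d-2$ is used), while the smallest minus-index $s+r-1$ contributes $\binom{t}{m}$ with $t=d-r-s\le d-4$; since $d-2>t$, this pairing is strict, so $f_m(P)-\phi_m(v,d)>0$. (For $m=0$ these two terms coincide, which correctly reflects that $f_0=v$ is constant, and $m=d-1$ is excluded because the triplex minimiser need not be unique there.) I expect the main obstacle to be precisely the combinatorial accounting in the two telescoping steps: verifying the exact ranges of the plus- and minus-index sets, that they have the common cardinality $(r-1)(s-1)$, and that the plus-indices lie strictly below the minus-indices. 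Everything hinges on this separation, and it is guaranteed by the elementary inequalities $rs>r+s-1$ and $t=d-r-s\le d-4$, both of which follow from $r,s\ge2$ together with $rs\le d-1$.
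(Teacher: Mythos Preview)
Your proof is correct and follows essentially the same route as the paper's. Both arguments use \cref{dplus2facets} to write $f_m(P)-\phi_m(v,d)$ as the difference of two telescoped sums of $(r-1)(s-1)$ binomial coefficients $\binom{d-1-x}{m}$, with all plus-indices $x\le r+s-3$ strictly below all minus-indices $x\ge r+s-1$; the paper bounds crudely by $(r-1)(s-1)\bigl(\binom{t+2}{m}-\binom{t}{m}\bigr)$ and handles strictness by a case split on whether $r=s=2$, whereas your single comparison $\binom{d-2}{m}>\binom{t}{m}$ works uniformly and is marginally cleaner.
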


\begin{proof}
We will repeatedly use the well known identity
$$\sum_{j=1}^p{d-j\choose n}={d\choose n+1}-{d-p\choose n+1} $$
which follows from repeated application of Pascal's identity.

Let $r,s$ and $t$ be given by  \cref{dplus2facets}. Our hypotheses imply that  $r\ge2,s\ge2$, and $rs+1=v-d\le d$.

We first claim that
$$ {d\choose m+2} -{d-r+1\choose m+2}-{d-s+1\choose m+2} +{d-r-s+1\choose m+2}+{d-rs\choose m+1}>0.$$

Using the identity above several times, we have
\begin{eqnarray*}
&  & {d\choose m+2} -{d-r+1\choose m+2}-{d-s+1\choose m+2} +{d-r-s+1\choose m+2}+{d-rs\choose m+1}\\
& = & \sum_{i=1}^{r-1}{d-i\choose m+1}-\sum_{i=1}^{r}{d-s+1-i\choose m+1}+{d-rs\choose m+1} \\
& = & \sum_{i=1}^{r-1}\bigg({d-i\choose m+1}-{d-s+1-i\choose m+1}\bigg)-{d-r-s+1\choose m+1}+{d-rs\choose m+1} \\
& = & \sum_{i=1}^{r-1}\sum_{j=1}^{s-1}{d-i-j\choose m}-{d-r-s+1\choose m+1}+{d-r-s+1-(r-1)(s-1)\choose m+1} \\
& = & \sum_{i=1}^{r-1}\sum_{j=1}^{s-1}{d-i-j\choose m}-\sum_{k=1}^{(r-1)(s-1)}{d-r-s+1-k\choose m} \\
& \ge & (r-1)(s-1){d-(r-1)-(s-1)\choose m}-(r-1)(s-1){d-r-s\choose m} \\
& = & (r-1)(s-1)\bigg({d-r-s+2)\choose m}-{d-r-s\choose m}\bigg)\ge0,
\end{eqnarray*}
as required.  To complete the proof of the claim, note that the last inequality is strict if $r=s=2$, and the previous inequality is strict otherwise.

Adding ${d+1\choose m+1}+{d\choose m+1}$ to both sides and rearranging, we obtain
$${d+2\choose m+2} -{d-r+1\choose m+2}-{d-s+1\choose m+2} +{d-r-s+1\choose m+2}>{d+1\choose m+1}+{d\choose m+1}-{d-rs\choose m+1}.$$

Recalling that $d=r+s+t$ and $2d+1-v=d-rs$, this is precisely the assertion that $f_m(P)>\phi_m(v,d)$.
\end{proof}

The  following technical result plays an important role in the next theorem.

\begin{lemma}\label{ugly}
Let $\alpha=\h(\sqrt5-1)$ denote the reciprocal of the golden ratio, and let $\beta=0.543689\ldots$ be  defined by $3\beta=(3\sqrt{33}+17)^{1/3}-(3\sqrt{33}-17)^{1/3}-1$.

(i) For all  integers $d$ and $m$ with $d\ge m\ge2$,
$${d\choose m}-{d\choose m+1}-{d-2\choose m-2}=\frac{m^2+dm-(d-1)^2}{(m+1)m}{d-2\choose m-1},$$
and this expression is strictly positive if either $m\ge\alpha d$, or if $m\ge{\frac35}(d-1)$ and $d\le15$.

(ii) For all  integers $d$ and $m$ with $d\ge m\ge3$,
$${d\choose m}-{d\choose m+1}-{d-3\choose m-3}=\frac{p(m,d)}{(m+1)m(m-1)}{d-3\choose m-2},$$
where $p(m,d)=m^3+(d-2)m^2+(d^2-2d-1)m-(d^3-4d^2+5d-2)$, and this expression is strictly positive if either $m\ge\beta d$, or if $m\ge\h d$ and $d\le17$.

\end{lemma}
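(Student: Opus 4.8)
The plan is to handle each part in two stages: first the binomial identity, and then the sign of the rational factor that multiplies the binomial coefficient. For the identities I would use Pascal's rule repeatedly rather than clearing factorials. Expanding $\binom{d}{m}=\binom{d-2}{m}+2\binom{d-2}{m-1}+\binom{d-2}{m-2}$ and likewise $\binom{d}{m+1}$, the left-hand side of (i) telescopes to $\binom{d-2}{m-1}-\binom{d-2}{m}-\binom{d-2}{m+1}$. Factoring out $\binom{d-2}{m-1}$ via $\binom{d-2}{m}=\binom{d-2}{m-1}\tfrac{d-1-m}{m}$ and $\binom{d-2}{m+1}=\binom{d-2}{m-1}\tfrac{(d-1-m)(d-2-m)}{m(m+1)}$ leaves the factor $\tfrac{m(m+1)-(d-1-m)(d-1)}{m(m+1)}$, whose numerator is exactly $m^2+dm-(d-1)^2$. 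Part (ii) is the same with the expansion $\binom{d}{m}=\sum_i\binom{3}{i}\binom{d-3}{m-i}$, so that the left-hand side becomes $2\binom{d-3}{m-2}-2\binom{d-3}{m}-\binom{d-3}{m+1}$; factoring out $\binom{d-3}{m-2}$ reduces the claim to the single polynomial identity
\[2(m-1)m(m+1)-2(m+1)(d-m-1)(d-m-2)-(d-m-1)(d-m-2)(d-m-3)=p(m,d),\]
which I would confirm by expansion.

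For positivity, note that $\binom{d-2}{m-1}$ and $\binom{d-3}{m-2}$ are positive for $m\le d-1$ and the denominators are positive, so the sign is that of the numerators $q(m)=m^2+dm-(d-1)^2$ and $p(m,d)$. Both are increasing in $m$: one has $q'(m)=2m+d>0$, while $\partial_m p=3m^2+2(d-2)m+(d^2-2d-1)$ has discriminant $-8d^2+8d+28<0$ for $d\ge3$ and so is strictly positive. It therefore suffices to evaluate the numerator at each threshold and appeal to monotonicity. Using $\alpha^2+\alpha=1$ (immediate from $\alpha=\tfrac12(\sqrt5-1)$) gives $q(\alpha d)=d^2-(d-1)^2=2d-1>0$, settling $m\ge\alpha d$; and a direct substitution gives the closed form $q\!\left(\tfrac35(d-1)\right)=\tfrac1{25}(d-1)(16-d)$, which is positive exactly when $2\le d\le15$. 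This both proves the refined case and pins down the bound $d\le15$.

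Part (ii) runs identically once I verify that the stated radical value of $\beta$ is the real root of $\beta^3+\beta^2+\beta-1=0$; this is Cardano's formula applied to the depressed cubic $y^3+\tfrac23y-\tfrac{34}{27}=0$ obtained via $y=\beta+\tfrac13$. Substituting $m=\beta d$ kills the $d^3$ term and, using $\beta^2+\beta=1-\beta^3$, leaves $p(\beta d,d)=2(1+\beta^3)d^2-(\beta+5)d+2$, a quadratic in $d$ with positive leading coefficient whose larger root lies below $2$, hence positive for all $d\ge2$; monotonicity then yields $m\ge\beta d\Rightarrow p(m,d)>0$. For the refined threshold, $p\!\left(\tfrac d2,d\right)=-\tfrac18\bigl(d^3-20d^2+44d-16\bigr)=-\tfrac18(d-2)(d^2-18d+8)$, which is positive exactly on $2<d<9+\sqrt{73}\approx17.54$, i.e. for integers $3\le d\le17$, matching the stated range.

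The only genuine obstacle I foresee is careful bookkeeping: verifying the cubic polynomial identity in (ii) and the Cardano reduction for $\beta$ without sign slips. Everything else is a one-line computation once $q(\alpha d)$, $q(\tfrac35(d-1))$, $p(\beta d,d)$ and $p(\tfrac d2,d)$ are written in closed form, and it is exactly these closed forms---rather than any case-by-case check---that make the cutoffs $15$ and $17$ transparent.
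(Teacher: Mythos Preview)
Your proposal is correct and follows essentially the same route as the paper. The paper's own proof is a three-line sketch: it declares the identity ``routine but tedious'' and simply records the values $q(\alpha d)=2d-1$ and $q\!\left(\tfrac35(d-1)\right)=\tfrac{1}{25}(d-1)(16-d)$, with part (ii) dismissed as ``likewise, noting that $\beta$ is the root of $x^3+x^2+x=1$''. Your write-up supplies exactly the details the paper omits---the Pascal-rule reduction to a single binomial coefficient, the explicit monotonicity of $q$ and $p$ in $m$, the Cardano verification of $\beta$, and the closed-form evaluations $p(\beta d,d)$ and $p(d/2,d)$---so the two arguments coincide, yours being the fleshed-out version.

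One small remark: you correctly restrict the positivity discussion to $m\le d-1$, since $\binom{d-2}{m-1}$ and $\binom{d-3}{m-2}$ vanish at $m=d$. At $m=d$ both sides of each identity are zero, so the ``strictly positive'' clause in the lemma statement is vacuous there; the paper does not comment on this either, and the applications only use $m\le d-2$.
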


\begin{proof}
(i) The proof of the combinatorial identity is  tedious but routine. If $m\ge\alpha d$, then $m^2+dm-(d-1)^2\ge2d-1$. If $m\ge{\frac35}(d-1)$, then  $m^2+dm-(d-1)^2\ge{\frac{1}{25}}(d-1)(16-d)$.

(ii) Likewise,  noting that $\beta$ is the root of the equation $x^3+x^2+x=1$.

\end{proof}

For  high dimensional faces other than  facets, the triplex is the unique minimiser.

\begin{theorem}
Fix $d, m,k$ with $k\le d$ and $m\ge0.62d$ (or $m\ge0.6(d-1)$ and $d\le15$), and let $P$ be a $d$-polytope with $d+k$ vertices. If $P$ is a triplex, then $f_m(P)=\phi_m(d+k,d)$. If $P$ is not a triplex,  and $m\ne d-1$, then $f_m(P)>\phi_m(d+k,d)$.
\end{theorem}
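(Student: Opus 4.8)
The plan is to treat the two assertions separately. That a triplex $P=M_{k,d-k}$ satisfies $f_m(P)=\phi_m(d+k,d)$ is the routine pyramid computation already recorded in Section 1, so the substance is the strict inequality for every non-triplex $P$. I would prove this by induction on $d$, keeping $m$ and $k$ as parameters, with the genuinely small dimensions absorbed by the ranges $d\le15$ (and $d\le17$) for which Lemma \ref{ugly} supplies the relevant binomial inequalities under the weaker threshold; this is exactly where the auxiliary clause ``$m\ge0.6(d-1)$ and $d\le15$'' of the statement is doing its work, matching the secondary provisos $m\ge\frac35(d-1)$ and $m\ge\h d$ in Lemma \ref{ugly}.

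For the inductive step I would split on the number of facets. If $P$ has exactly $d+2$ facets, then since $v=d+k\le2d$ and (as we may assume, facets being excluded) $1\le m\le d-2$, Theorem \ref{dplus2f} applies verbatim and yields $f_m(P)>\phi_m(v,d)$ whenever $P$ is not a triplex. This disposes of the ``few facets'' regime completely, and is the reason the structural result on $d+2$ facets was proved first. The remaining, and principal, case is that $P$ has at least $d+3$ facets, which I would attack by a facet induction in the spirit of Theorem \ref{big}. Choose a facet $F$ and let $n$ be the number of vertices outside it. When $n=1$, $P$ is a pyramid over $F$, so $f_m(P)=f_m(F)+f_{m-1}(F)$; combined with the identity
$$\phi_m(d+k,d)=\phi_m(d+k-1,d-1)+\phi_{m-1}(d+k-1,d-1),$$
which follows from two applications of Pascal's rule, this reduces the claim to the corresponding statements for the $(d-1)$-polytope $F$ (and $F$ cannot be a triplex, else $P$ would be). When $n\ge2$, I would estimate $f_m(P)$ as the $m$-faces lying in $F$ plus those meeting the $n$ outside vertices, and the comparison with $\phi_m(v,d)$ would reduce to the positivity of a surplus of the shape $\binom{d}{m}-\binom{d}{m+1}-\binom{d-2}{m-2}$ (or its $\binom{d-3}{m-3}$ analogue) — precisely the content of Lemma \ref{ugly}(i),(ii).

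The main obstacle is this $n\ge2$ count. Unlike the edge case $m=1$, there is no elementary degree bound for the number of high-dimensional faces incident to the outside vertices, so the structural surplus of a non-triplex must be squeezed through the identities of Lemma \ref{ugly}; these hold only for $m$ above $\alpha d$ (respectively $\beta d$), and since $\alpha=1/\varphi\approx0.618$ is the larger of the two, the threshold $0.62d$ is chosen just above it. A second delicate point is bookkeeping: the pyramid step invokes the result at level $m-1$ as well as $m$, so one must check that the slack between $0.62$ and $\alpha$ keeps the pair $(d-1,m-1)$ inside the admissible region, falling back on the small-dimension clauses where it does not. I expect keeping the induction inside this admissible range for $m$, while maintaining the strictness needed for uniqueness, to be the most technical part of the argument.
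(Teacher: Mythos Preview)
Your handling of the $d+2$-facet case via Theorem~\ref{dplus2f} is correct, and you have correctly identified Lemma~\ref{ugly}(i) as the binomial inequality that closes the argument. But your proposed facet induction for the $\ge d+3$-facet case is not the paper's route, and as written it has real gaps.

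The paper does not induct on $d$ at all. Once $P$ has at least $d+3$ facets, the dual $P^*$ has at least $d+3$ vertices, and Gr\"unbaum's own result \cite[10.2.2]{G} (his conjecture, which he proved for $v\le d+3$) gives directly
\[
f_m(P)=f_{d-m-1}(P^*)\ \ge\ \phi_{d-m-1}(d+3,d)=\binom{d+1}{m+1}+\binom{d}{m}-\binom{d-2}{m-2}.
\]
Subtracting $\phi_m(d+k,d)$ and applying Pascal's identity once leaves exactly
\[
\binom{d}{m}-\binom{d}{m+1}-\binom{d-2}{m-2}+\binom{d+1-k}{m+1},
\]
which is strictly positive by Lemma~\ref{ugly}(i). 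That is the whole proof: the ``$\ge d+3$ facets'' hypothesis is cashed in via duality, not via facet counting inside $P$.

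Your facet induction, by contrast, has two genuine obstructions. First, the $n\ge2$ step is not a proof: you correctly note that there is no analogue of Lemma~\ref{outside} for $m$-faces, and you give no mechanism for bounding the number of $m$-faces meeting the outside vertices; the assertion that the surplus ``would reduce'' to the expression in Lemma~\ref{ugly} is precisely the missing argument. Second, the pyramid step requires the inductive hypothesis at $(d-1,m-1)$, but $m\ge0.62d$ does not imply $m-1\ge0.62(d-1)$ (it implies only $m-1\ge0.62(d-1)-0.38$), so the parameters drift out of the admissible range and this drift accumulates under repeated pyramids; the small-dimension clause only rescues you once $d-1\le15$. The dual argument sidesteps both problems in a single line, which is why the paper proved Theorem~\ref{dplus2f} first and set up Lemma~\ref{ugly} in exactly the form it has.
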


\begin{proof}

The conclusion about triplices was noted earlier. Henceforth, assume that $P$ is not a triplex.
Thanks to  \cref{dplus2f}, we may also suppose $P$ has $d+3$ or more facets.

Then the dual polytope $P^*$ has at least $d+3$ vertices. According to \cite[10.2.2]{G}, we then have
\begin{eqnarray*}
f_m(P)=f_{d-m-1}(P^*)&\ge&\phi_{d-m-1}(d+3,d)\\
&=&{d+1\choose d-m}+{d\choose d-m}-{d-2\choose d-m}\\
&=&{d+1\choose m+1}+{d\choose m}-{d-2\choose m-2}.
\end{eqnarray*}

Recalling the definition of $\phi_m$, and applying Pascal's identity, we then have
$$
f_m(P)-\phi_{m}(d+k,d)\ge{d\choose m}-{d\choose m+1}-{d-2\choose m-2}+
{d+1-k\choose m+1}.
$$

\cref{ugly}(i) guarantees that this is strictly positive.

\end{proof}

We are now able to confirm Gr\"unbaum's conjecture for $d\le5$.  Of course he proved it for $k\le4$, so need only consider the
case $d = k = 5$. Within this case, we have proved it now for $m=1$ and $m\ge3$, and thus we fix $m=2$.  So let $P$ be a 5-dimensional polytope with 10 vertices, $e$ edges, $t$ 2-dimensional faces, $r$ ridges and $f$ facets.  A prism has $\phi_2(10,5)=30$ 2-faces. If $P$ is not a triplex,  \cref{biggap} ensures $e\ge27$. If $f=7$, then $P$ is a pyramid over $\Delta_{2,2}$, and then $t=33$. So suppose $f\ge8$.
Euler's relation tells us that $10-e+t-r+f=2$, and the dimension ensures $5f\le2r$. It follows that
$$t=(e-8)+(r-f)\ge19+\h(3f)\ge31>\phi_2(10,5).$$

Thus the first cases for which Gr\"unbaum's conjecture remain open are $d=6$, $m=2$, $v=11$ or 12.

This proof actually shows that there are gaps in the number of $m$-faces for  values of $m$ other than 1,  something which has not been previously observed. For example, in dimension 5, the triplex $M_{3,2}$ has 20 ridges, but every other 5-polytope with 8 vertices has at least 22 ridges.

Under the additional assumption that $f_{d-1}(P)\ne d+3$, a slightly stronger conclusion is possible.

\begin{proposition}
Fix $d, m,k$ with $k\le d$ and $m\ge0.55d$ (or $m\ge0.5d$ and $d\le17$), and let $P$ be a $d$-polytope with $d+k$ vertices. If $P$ has $d+4$ or more facets, then $f_m(P)>\phi_m(d+k,d)$.
\end{proposition}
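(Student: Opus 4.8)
The plan is to run the argument of the preceding theorem essentially verbatim, the one improvement being that the hypothesis of $d+4$ (rather than $d+3$) facets lets me replace \cref{ugly}(i) by the sharper \cref{ugly}(ii), which is exactly what lowers the threshold on $m$ from $0.62d$ to $0.55d$. First I would dualise: since $P$ has at least $d+4$ facets, its dual $P^*$ has at least $d+4$ vertices. As Gr\"unbaum confirmed his conjecture for all polytopes with at most $d+4$ vertices, the very device used in the preceding theorem (via \cite[10.2.2]{G}) to obtain $f_{d-m-1}(P^*)\ge\phi_{d-m-1}(d+3,d)$ now gives
$$f_m(P)=f_{d-m-1}(P^*)\ge\phi_{d-m-1}(d+4,d).$$
I would also note in passing that the hypothesis already forces $P$ not to be a triplex, since a triplex has at most $d+2$ facets, so a strict inequality is exactly what one should expect.

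The second step is the binomial bookkeeping. Using ${a\choose b}={a\choose a-b}$ one rewrites $\phi_{d-m-1}(d+4,d)={d+1\choose m+1}+{d\choose m}-{d-3\choose m-3}$, and since $\phi_m(d+k,d)={d+1\choose m+1}+{d\choose m+1}-{d+1-k\choose m+1}$, subtracting gives
$$f_m(P)-\phi_m(d+k,d)\ge{d\choose m}-{d\choose m+1}-{d-3\choose m-3}+{d+1-k\choose m+1}.$$
The final binomial coefficient is non-negative, so it suffices to show ${d\choose m}-{d\choose m+1}-{d-3\choose m-3}>0$; this is precisely the quantity estimated in \cref{ugly}(ii).

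The last step is to read off positivity from that lemma. Since $0.55>\beta=0.543689\ldots$, the hypothesis $m\ge0.55d$ yields $m\ge\beta d$, whereas the alternative hypothesis $m\ge0.5d$ with $d\le17$ is literally the second branch of \cref{ugly}(ii); in either case the expression is strictly positive and the proof is complete. I do not expect any real obstacle here: all the analytic weight has been pushed into \cref{ugly}(ii), whose cubic positivity estimate $p(m,d)>0$ is the genuine crux. The only mild points of care will be the degenerate cases $m\le2$ (which force $d\le3$, where the claim is elementary and the standing assumption $m\ge3$ of \cref{ugly}(ii) is irrelevant), and confirming that the dual lower bound remains valid when $P^*$ has strictly more than $d+4$ vertices — but this last point is inherited unchanged from the corresponding step of the preceding theorem.
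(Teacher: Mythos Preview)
Your proposal is correct and matches the paper's own proof essentially verbatim: the paper likewise says ``much the same as before, using Gr\"unbaum's result \cite[10.2.2]{G} that $f_{d-m-1}(P^*)\ge\phi_{d-m-1}(d+4,d)$ and Lemma~\ref{ugly}(ii).'' Your binomial bookkeeping and the reading of the positivity from the $\beta$-branch of Lemma~\ref{ugly}(ii) are exactly the intended argument.
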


\begin{proof}
Much  as before, using Gr\"unbaum's result  \cite[10.2.2]{G} that $f_{d-m-1}(P^*)\ge\phi_{d-m-1}(d+4,d)$, and \cref{ugly}(ii).
\end{proof}

Since we have investigated the minimal number of edges for $d$-polytopes with $2d+1$ vertices, we will do the same for facets and ridges. For $k=d+1$, the last step of the previous proof breaks down. Before continuing, we rephrase the remaining case of \cite[Theorem 2]{M}.

\begin{proposition}
Fix  $k>d$ and consider the class of $d$-polytopes with $d+k$ vertices. Then this class contains a polytope with $d+2$ facets if, and only if, $k-1$ is a composite number, say $rs$, with $r+s\le d$. Different  factorisations of $k-1$ give rise to combinatorially distinct polytopes.
\end{proposition}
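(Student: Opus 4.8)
The plan is to read off everything from \cref{dplus2facets}, which says that every $d$-polytope with $d+2$ facets is a $t$-fold pyramid over $\Delta_{r,s}$ for some $r,s\ge1$ and $t\ge0$ with $r+s+t=d$, and that such a polytope has $(r+1)(s+1)+t$ vertices. First I would rewrite the vertex count: since
$$(r+1)(s+1)+t=rs+1+(r+s+t)=d+rs+1,$$
a polytope of this form has exactly $d+k$ vertices if and only if $rs=k-1$. The constraint $t\ge0$ is exactly $r+s\le d$, while $r,s\ge1$ throughout. Thus the polytopes in question are precisely the $(d-r-s)$-fold pyramids over $\Delta_{r,s}$ with $rs=k-1$ and $r+s\le d$.

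For the forward direction, suppose such a $P$ exists, so that $rs=k-1$ with $r+s\le d$. If $r=1$ (or, symmetrically, $s=1$), then $s=k-1$ and $r+s=k$, whence $k=r+s\le d<k$, a contradiction. Hence $r,s\ge2$, so $k-1=rs$ is composite with a factorisation satisfying $r+s\le d$. For the converse, given a composite $k-1=rs$ with $r,s\ge2$ and $r+s\le d$, I would set $t=d-r-s\ge0$ and form the $t$-fold pyramid over $\Delta_{r,s}$; by \cref{dplus2facets} this is a $d$-polytope with $d+2$ facets and $d+rs+1=d+k$ vertices, as required.

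For the final assertion I would exhibit a combinatorial invariant that recovers the unordered pair $\{r,s\}$, namely the number of non-simple vertices. Since $\Delta_{r,s}$ is a simple $(r+s)$-polytope, each of its $(r+1)(s+1)$ vertices has degree $r+s$ there; in the $t$-fold pyramid each such vertex acquires one further edge to each of the $t$ apices, so its degree becomes $r+s+t=d$, i.e.\ it is simple in $P$. Each apex, on the other hand, is adjacent to all remaining vertices and so has degree $d+k-1>d$. Consequently $P$ has exactly $t=d-r-s$ non-simple vertices, a purely combinatorial quantity. Two factorisations $rs=k-1=r's'$ with $\{r,s\}\ne\{r',s'\}$ must have $r+s\ne r'+s'$, since equal product together with equal sum would force the pairs to coincide; hence they yield different values of $t$ and therefore combinatorially distinct polytopes. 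The whole argument is routine once \cref{dplus2facets} is in hand, and the only step requiring a moment's thought is this last one, where the main task is simply to isolate the right invariant.
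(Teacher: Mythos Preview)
Your argument is correct and follows the same route as the paper: both reduce the question to the classification in \cref{dplus2facets}, rewrite the vertex count as $d+rs+1$, and rule out $r=1$ or $s=1$ via $r+s=k>d$. For the final distinctness assertion the paper says nothing explicit (presumably relying on \cref{dplus2facets} being a genuine classification), whereas you supply a concrete invariant, the number $t=d-r-s$ of non-simple vertices, which cleanly separates unordered pairs $\{r,s\}$ with the same product; this is a nice addition.
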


\begin{proof}
Again by  \cref{dplus2facets}, the existence of such a polytope $P$ is equivalent to the existence of $r\ge1,s\ge1, t\ge0$ with $d+k=(r+1)(s+1)+t$  and  $d=r+s+t$. This implies that $k-1=rs$, and we cannot have $r$ or $s=1$, because then $r+s=1+k-1>d$. Conversely, given $r$ and $s$, put $t=d-r-s$ and consider a $t$-fold pyramid over $\Delta_{r,s}$.
\end{proof}

Returning briefly to the question of monotonicity, this result shows  that $\min F_4(11,5)=8$ but $\min F_4(12,5)=7$. So for fixed $m$ and $d$, $\min F_{m}(v,d)$ is not generally a monotonic function of $v$.

If $r+s\le d$, then $rs\le\frac{1}{4} d^2$, and $P$ can have at most $\frac{1}{4}d^2+d+1$ vertices. McMullen \cite[p 352]{M} showed that, for $d+2\le v\le\frac{1}{4}d^2+2d$, there is a $d$-polytope with $v$ vertices and $d+3$ facets. In particular, when $\frac{1}{4}d^2+d+1< v\le\frac{1}{4}d^2+2d$, he
proved that  $\min F_{d-1}(v,d)=d+3$.
By  \cite[Theorem 2]{M}, for $ 2d+1\le v\le\frac{1}{4}d^2+d+1$, $\min F_{d-1}(v,d)$ is either $d+3$ or $d+2$, depending on whether $v-d-1$ is prime or composite.

But we continue to restrict our attention to polytopes with $2d+1$ or fewer vertices. For facets, the next result reformulates McMullen's work in this special case, with a different proof.

\begin{proposition}
Consider the class of $d$-polytopes with $2d+1$ vertices.

(i) If $d$ is a prime, the minimal possible number of facets is $d+3$, and the minimiser is not unique.

(ii) If $d$ is a product of 2 primes, the minimal number of facets is $d+2$, and the minimiser is unique.

(iii) If $d$ is a product of 3 or more primes, the minimal possible number of facets is $d+2$, and the minimiser is not unique.
\end{proposition}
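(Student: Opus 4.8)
The plan is to read off both the minimum number of facets and the number of minimisers from the classification of polytopes with $d+2$ facets supplied by \cref{dplus2facets}. Since no simplex has $2d+1$ vertices (for $d\ge1$), every $d$-polytope with $2d+1$ vertices has at least $d+2$ facets; hence the minimum equals $d+2$ exactly when such a polytope with $d+2$ facets exists, and is strictly larger otherwise. This reduces all three parts to deciding existence and counting.

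First I would analyse the equality case $f_{d-1}=d+2$. By \cref{dplus2facets} such a polytope is a $t$-fold pyramid over $\Delta_{r,s}$ with $d=r+s+t$ and $(r+1)(s+1)+t$ vertices. Equating the vertex count to $2d+1$ and eliminating $t$ gives $rs=d$ with $t=d-r-s\ge0$; since $(r-1)(s-1)\ge1$ already forces $r+s\le rs=d$, the only genuine constraint is $r,s\ge2$. Thus these polytopes are in bijection with the factorisations $d=rs$, $2\le r\le s$, and (as observed earlier for the case $k>d$) distinct factorisations give combinatorially distinct polytopes. Consequently the minimum number of facets is $d+2$ precisely when $d$ is composite, and in that case the number of minimisers equals the number of factorisations $d=rs$ with $2\le r\le s$.

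The three cases then follow by elementary counting. If $d$ is prime there is no admissible factorisation, so the minimum exceeds $d+2$; the $d$-dimensional pentasm introduced earlier has $2d+1$ vertices and exactly $d+3$ facets, so the minimum is $d+3$. If $d$ is a product of two primes (that is, $d=pq$ or $d=p^2$) there is exactly one factorisation with both factors at least $2$, so the minimiser is unique. If $d$ has three or more prime factors there are at least two such factorisations, giving at least two minimisers. For the non-uniqueness in the prime case it suffices to produce, besides the pentasm, a second $d$-polytope with $2d+1$ vertices and $d+3$ facets; because the pentasm is not a pyramid, any pyramid with these parameters is automatically distinct from it, and one arises as a pyramid over a suitable $(d-1)$-polytope with $2d$ vertices and $d+2$ facets, whose existence I would establish by induction on $d$ (or by appeal to McMullen's range result \cite{M}).

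The main obstacle is precisely the non-uniqueness assertion in the prime case: the structure theory of \cref{dplus2facets} controls only polytopes with $d+2$ facets, whereas here the minimisers have $d+3$ facets, so a separate construction (or the finer Gale-diagram classification of polytopes with $d+3$ vertices, applied to the dual) is needed to exhibit a second example. A secondary point requiring care is the number-theoretic translation underlying (ii) and (iii): one must check that the count of factorisations $d=rs$ with $2\le r\le s$ is exactly one for products of two primes and at least two for the remaining composite $d$, with particular attention to prime cubes $d=p^3$, which have three prime factors yet admit only the single factorisation $p\cdot p^2$, so that the robust dichotomy is really between one and at least two factorisations.
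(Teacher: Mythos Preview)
Your approach is essentially the paper's: both reduce everything to the classification in \cref{dplus2facets}, obtain the equation $rs=d$ with $r,s\ge2$, and count unordered factorisations. For the second example in (i) the paper gives an explicit construction---a pyramid over the direct sum of the triplex $M_{2,d-4}$ with a segment, which has $2d+1$ vertices and $d+3$ facets---rather than appealing to McMullen's range result; your pyramidal idea is in the same spirit, and the observation that the pentasm is not a pyramid is exactly what makes the two examples distinct.

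Your caution about prime cubes is well placed and in fact exposes a genuine defect. For $d=p^{3}$ we have $\Omega(d)=3$, yet the only factorisation into factors $\ge2$ is $p\cdot p^{2}$, so by \cref{dplus2facets} there is a \emph{unique} $d$-polytope with $2d+1$ vertices and $d+2$ facets. The paper's proof of (iii) reads in full ``follow from the preceding result'', which does not cover this case; so the statement of (iii) as written is not correct for $d=p^{3}$ (the smallest instance being $d=8$). The accurate dichotomy is the one you describe: uniqueness holds precisely when $d$ admits a single such factorisation, i.e.\ when $\tau(d)\in\{3,4\}$ (equivalently $d\in\{p^{2},pq,p^{3}\}$), and fails when $\tau(d)\ge5$. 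You should not expect to prove (iii) as stated.
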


\begin{proof}

(i) If $d$ is prime,  the previous result ensures that there is no $d$-polytope with $2d+1$ vertices and $d+2$ facets. We need to show that there at least two $d$-polytopes with $2d+1$ vertices and $d+3$ facets. \cref{pentam} tells that there are precisely 2 such examples if $d=3$.

For $d\ge4$, the structure of $d$-polytopes with $d+3$ facets is moderately well understood {\cite[\S6.2 \& \S6.7]{G} or \cite{Fu}, so the existence of two distinct such polytopes should come as no surprise. Our work so far makes it easy to give two examples; the rest of this paragraph does not require $d$ to be prime. The pentasm is one obvious example. For a second, consider the pyramid whose base is the Minkowski sum of a line segment and $M_{2,d-4}$. This triplex has dimension $d-2$, $d$ vertices and $d$ facets; its direct sum with a segment has dimension $d-1$, $2d$ vertices and $d+2$ facets; and a pyramid thereover has dimension $d$, $2d+1$ vertices and $d+3$ facets. (It is likely that two is a serious underestimate of the number of examples; in dimension four, there are six examples \cite[Figure 5]{Gru70}.)}

(ii) and (iii) follow from the preceding result. \end{proof}

Finally, we announce the corresponding result for ridges. The proof  is much longer, and will appear elsewhere.

\begin{proposition}
Consider the class of $d$-polytopes with $2d+1$ vertices.

(i) If $d$ is a prime, the minimal number of ridges is $\h(d^2+5d-2)$, and the pentasm is the unique minimiser.

(ii) If $d$ is a product of two primes, the minimal number of ridges is $\h(d^2+3d+2)$, and the minimiser is unique.

(iii) If $d$ is a product of three or more primes, the minimal number of ridges is $\h(d^2+3d+2)$, and the minimiser is not unique.
\end{proposition}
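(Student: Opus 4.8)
The plan is to read off the minimiser in each case from $f$-vectors that are already available, and then to prove matching lower bounds by an induction on $d$ modelled on the proof of Theorem \ref{pentam}.

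First I would identify the candidates and their ridge numbers. Putting $m=d-2$ in the pentasm $f$-vector $f_m=\binom{d+1}{m+1}+\binom{d}{m+1}+\binom{d-1}{m}$ gives $f_{d-2}=\binom{d+1}{2}+d+(d-1)=\h(d^2+5d-2)$, the value in part (i). For the other two parts the competitor is a polytope with only $d+2$ facets; by Lemma \ref{dplus2facets} it is a $t$-fold pyramid over $\Delta_{r,s}$ with $d=r+s+t$, and imposing $2d+1$ vertices forces $(r+1)(s+1)+t=2d+1$, hence $rs=d$, which (since $t=d-r-s\ge0$) requires $r,s\ge2$. Substituting $m=d-2$ into the $f$-vector of Lemma \ref{dplus2facets} and observing that $s+t+1=d-r+1$, $r+t+1=d-s+1$ and $t+1=d-r-s+1$ are all strictly below $d$ when $r,s\ge2$, every binomial coefficient but the first vanishes and $f_{d-2}=\binom{d+2}{2}=\h(d^2+3d+2)$. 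Exactly as in the facet analogue proved above, a factorisation $d=rs$ with $r,s\ge2$ exists iff $d$ is composite, is unique up to order iff $d$ is a product of two primes, and has several solutions iff $d$ has at least three prime factors; since each such factorisation yields the same ridge count $\binom{d+2}{2}$, which is smaller than the pentasm's by exactly $d-2$, this settles the minimiser and its (non)uniqueness in parts (ii) and (iii).

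For the lower bound I would show that no $(2d+1)$-vertex polytope undercuts the relevant candidate, proceeding by induction on $d$ in the style of Theorem \ref{pentam}. The bookkeeping rests on the incidence identity $2f_{d-2}(P)=\sum_F f_{d-2}(F)$, summed over facets $F$ (each ridge lies in two facets), together with McMullen's facet bound \cite{M} in dimension $d-1$: a facet contributes at least $d$, at least $d+1$ unless it is a simplex, and more still once it carries many vertices. After excluding pyramids and the case that every facet is small---both as in Theorem \ref{pentam}---one selects a facet $F$ and controls the number $n$ of vertices outside it via Lemma \ref{outside} and the identity of Lemma \ref{boring}; for a pyramid one additionally uses the exact recursion $f_{d-2}(P)=f_{d-2}(F)+f_{d-3}(F)$, which draws the lower-dimensional ridge estimates (for $2d$ and $2d+2$ vertices, some of them deferred to \cite{PUY}) into play. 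The decisive dichotomy, precisely as in Lemma \ref{3outside} and Lemma \ref{prism}, is whether $P$ carries a facet on $2d-1$ vertices (forcing a lower pentasm) or is assembled from prisms and simplices (leading to the $d+2$-facet competitor).

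The real difficulty, and the reason the complete argument is long, is the exact equality analysis, above all in the prime case. When $d$ is prime the $(d+2)$-facet competitor cannot exist, so one must prove that the pentasm is the unique minimiser, at $\h(d^2+5d-2)$ ridges; this needs a ridge analogue of Lemma \ref{pentasm}---that a pentasm facet together with minimality of the ridge count forces $P$ itself to be a pentasm---and the same delicate matching of each ridge with its ``other facet'' performed in Lemmas \ref{pentasm}, \ref{3outside} and \ref{prism}, but now tracking $(d-2)$-faces rather than edges. The composite case is comparatively clean once the universal bound $f_{d-2}(P)\ge\binom{d+2}{2}$ and its equality condition---dual to the facet analysis already carried out---are established, since distinct factorisations give distinct minimisers sharing the ridge count $\binom{d+2}{2}$. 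What remains is to settle the low-dimensional base cases, including the anomalies already seen in Theorem \ref{pentam}, and it is this multiplication of facet-structure cases, rather than any single hard step, that inflates the proof and explains why it is deferred.
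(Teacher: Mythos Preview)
The paper does not actually prove this proposition: it is explicitly announced with the sentence ``The proof is much longer, and will appear elsewhere'' (with a reference to \cite{PUY}). So there is no proof in the paper for your attempt to be compared against; the authors themselves regard a full argument as too long for inclusion here.

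That said, the part of your proposal that identifies the candidate minimisers and computes their ridge numbers is entirely correct and matches the paper's framework. Substituting $m=d-2$ into the pentasm $f$-vector indeed gives $\h(d^2+5d-2)$, and your analysis via Lemma~\ref{dplus2facets} that a $(2d+1)$-vertex polytope with $d+2$ facets must satisfy $rs=d$ with $r,s\ge2$, hence has $\binom{d+2}{2}=\h(d^2+3d+2)$ ridges, is exactly right; the (non)uniqueness dichotomy according to the number of prime factors of $d$ then follows just as in the facet version already proved in the paper.

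Where your sketch is genuinely incomplete is the lower bound, and you acknowledge this. Two remarks are worth making. First, the identity $2f_{d-2}(P)=\sum_F f_{d-2}(F)$ is a sound starting point, but combining it with McMullen's per-facet bounds loses too much information to reach the sharp constants: one needs structural control of which facets are simplices, prisms, pentasms, etc., in the spirit of Lemmas~\ref{3outside} and~\ref{prism}, and this is indeed the source of the length the authors allude to. Second, your remark that the composite case is ``dual to the facet analysis already carried out'' is optimistic: what would be needed is that every $d$-polytope with $2d+1$ \emph{facets} has at least $\binom{d+2}{2}$ edges, with equality characterised, and that statement is not established (nor obviously implied by anything) in this paper. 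So while your outline is in the right spirit, it should not be read as reducing the problem to results already in hand.
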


We agree with McMullen \cite[p 351]{M}; for dimensions
$d\ge6$, the problem of determining $\min F_m(v, d)$ for $2\le m\le\h d$ appears to be extremely
difficult.

\section{Acknowledgments} We thank Eran Nevo for assistance with the translation of  \cite[Theorem 7.1]{Kal}. 

The research of Guillermo Pineda-Villavicencio was supported partly by the Indonesian government Scheme P3MI, Grant No. 1016/I1.C01/PL/2017, and partly by a grant from the Capital Markets Cooperative Research Centre.
Julien Ugon is the recipient of an Australian Research Council Discovery Project (project number DP180100602) funded by the Australian Government.

\end{document}